\numberwithin{equation}{section}
\newtheorem{theorem}{Theorem}[section]
\newtheorem{definition}[theorem]{Definition}
\newtheorem{proposition}[theorem]{Proposition}
\newtheorem{lemma}[theorem]{Lemma}
\newtheorem{corollary}[theorem]{Corollary}
\theoremstyle{definition}
\newtheorem{example}[theorem]{Example}
\newtheorem{remark}[theorem]{Remark}
\title{\textbf{Some homological properties\\ of
skew $ PBW $ extensions\\
arising in non-commutative algebraic geometry}}
\author{Oswaldo Lezama\\
\texttt{jolezamas@unal.edu.co}
\\Helbert Venegas
\\ Seminario de Álgebra Constructiva - SAC$^2$\\ Departamento de Matemáticas\\ Universidad Nacional de
Colombia, Sede Bogotá}
\date{}
\begin{document}
\maketitle
\begin{abstract}
\noindent In this short paper we study for the skew $PBW$ (Poincaré-Birkhoff-Witt) extensions some
homological properties arising in non-commutative algebraic geometry, namely, Auslander-Goresntein
regularity, Cohen-Macaulayness and strongly noetherianity. Skew $PBW$ extensions include a
considerable number of non-commutative rings of polynomial type such that classical $PBW$
extensions, quantum polynomial rings, multiplicative analogue of the Weyl algebra, some Sklyanin
algebras, operator algebras, diffusion algebras, quadratic algebras in $3$ variables, among many
others. For some key examples we present the parametrization of its point modules.

\bigskip

\noindent \textit{Key words and phrases.}  Auslander regularity condition, Cohen-Macaulay rings,
strongly noetherian algebras, skew $PBW$ extensions, filtered-graded rings, point modules.

\bigskip

\noindent 2010 \textit{Mathematics Subject Classification.} Primary: 16S38. Secondary: 16W50,
16S80, 16S36.
\end{abstract}

\section{Introduction}

In the study of non-commutative algebraic geometry many important classes of non-commutative rings
and algebras have been investigated intensively in the last years. Actually, the non-commutative
algebraic geometry consists in generalizing some classical results of commutative algebraic
geometry to some special non-commutative rings and algebras. Probably the most important types of
such rings are the Artin-Schelter regular algebras, Auslander regular algebras, the Auslander
Gorenstein rings, the Cohen-Macaulay rings, and the strongly Noetherian algebras. In
non-commutative algebraic geometry, Artin-Schelter regular algebras are the analog of the
commutative polynomials in commutative algebraic geometry, and in addition, all examples studied of
Artin-Schelter regular algebras are Auslander regular, Auslander-Gorenstein, Cohen-Macaulay, and
strongly Noetherian. In the present paper we are interested in the Auslander-Gorenstein,
Cohen-Macaulay, and strongly Noetherian conditions for skew $PBW$ extensions (the Auslander regular
condition was investigated in \cite{Reyes}). Skew $PBW$ extensions are rings of polynomial type and
generalize the classical $ PBW $ extensions (see \cite{LezamaGallego}, \cite{Oswaldo}). Many
ring-theoretical and homological properties of skew $ PBW $ extensions have been studied in the
last years (Hilbert basis theorem, global dimension, Krull dimension, Goldie dimension, prime
ideals, etc., see \cite{Acosta1}, \cite{Acosta}, \cite{Oswaldo}, \cite{Reyes}); however, the three
homological conditions mentioned before arising in non-commutative algebraic geometry have not been
considered for the skew $ PBW $ extensions.

In the first and second sections we recall the definition of skew $ PBW $ extensions and some
important examples of this type of non-commutative rings. In the third section we discuss the
Auslander-Gorenstein condition. The next two sections are dedicated to investigate the
Cohen-Macaulay and the strongly Noetherian properties. In the last section we present an
application of the strongly noetherianity to parametrize the point modules of some key skew $PBW$
extensions.

\begin{definition}[\cite{LezamaGallego}]\label{gpbwextension}
Let $R$ and $A$ be rings. We say that $A$ is a \textit{skew $PBW$ extension of $R$} $($also called
a $\sigma-PBW$ extension of $R$$)$ if the following conditions hold:
\begin{enumerate}
\item[\rm (i)]$R\subseteq A$.
\item[\rm (ii)]There exist finitely many elements $x_1,\dots ,x_n\in A$ such $A$ is a left $R$-free module with basis
\begin{center}
${\rm Mon}(A):= \{x^{\alpha}=x_1^{\alpha_1}\cdots x_n^{\alpha_n}\mid \alpha=(\alpha_1,\dots
,\alpha_n)\in \mathbb{N}^n\}$, with $\mathbb{N}:=\{0,1,2,\dots\}$.
\end{center}
The set $Mon(A)$ is called the set of standard monomials of $A$.
\item[\rm (iii)]For every $1\leq i\leq n$ and $r\in R-\{0\}$ there exists $c_{i,r}\in R-\{0\}$ such that
\begin{equation}\label{sigmadefinicion1}
x_ir-c_{i,r}x_i\in R.
\end{equation}
\item[\rm (iv)]For every $1\leq i,j\leq n$ there exists $c_{i,j}\in R-\{0\}$ such that
\begin{equation}\label{sigmadefinicion2}
x_jx_i-c_{i,j}x_ix_j\in R+Rx_1+\cdots +Rx_n.
\end{equation}
Under these conditions we will write $A:=\sigma(R)\langle x_1,\dots ,x_n\rangle$.
\end{enumerate}
\end{definition}
Associated to a skew $PBW$ extension $A=\sigma(R)\langle x_1,\dots ,x_n\rangle$, there are $n$
injective endomorphisms $\sigma_1,\dots,\sigma_n$ of $R$ and $\sigma_i$-derivations, as the
following proposition shows.
\begin{proposition}\label{sigmadefinition}
Let $A$ be a skew $PBW$ extension of $R$. Then, for every $1\leq i\leq n$, there exist an injective
ring endomorphism $\sigma_i:R\rightarrow R$ and a $\sigma_i$-derivation $\delta_i:R\rightarrow R$
such that
\begin{center}
$x_ir=\sigma_i(r)x_i+\delta_i(r)$,
\end{center}
for each $r\in R$.
\end{proposition}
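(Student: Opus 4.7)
The plan is to extract $\sigma_i$ and $\delta_i$ directly from condition (iii) of the definition and then exploit the freeness of $A$ as a left $R$-module (condition (ii)) to obtain the algebraic structure. First I would fix $i$ and, for any $r\in R$, use (iii) together with (\ref{sigmadefinicion1}) to write $x_ir = c_{i,r}x_i + d_{i,r}$ for some elements $c_{i,r}\in R-\{0\}$ (or $0$ if $r=0$) and $d_{i,r}\in R$. The key observation is that $1$ and $x_i$ are distinct elements of the basis $\mathrm{Mon}(A)$, so this decomposition is unique. Defining $\sigma_i(r):=c_{i,r}$ and $\delta_i(r):=d_{i,r}$ (and $\sigma_i(0):=0$, $\delta_i(0):=0$) gives well-defined set maps $\sigma_i,\delta_i\colon R\to R$ satisfying $x_ir=\sigma_i(r)x_i+\delta_i(r)$.

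Next I would verify the algebraic properties by comparing the two natural ways of expanding $x_i(r+s)$ and $x_i(rs)$ and then appealing to the uniqueness afforded by the $R$-basis $\{1,x_i,\ldots\}$. For additivity, both $x_i(r+s)$ and $x_ir+x_is=(\sigma_i(r)+\sigma_i(s))x_i+(\delta_i(r)+\delta_i(s))$ must coincide as elements expressed in the basis, forcing $\sigma_i$ and $\delta_i$ to be additive. For multiplicativity of $\sigma_i$, I would compute $x_i(rs)=(x_ir)s=\sigma_i(r)(x_is)+\delta_i(r)s=\sigma_i(r)\sigma_i(s)x_i+\sigma_i(r)\delta_i(s)+\delta_i(r)s$, and again invoke uniqueness to conclude that $\sigma_i(rs)=\sigma_i(r)\sigma_i(s)$ and simultaneously $\delta_i(rs)=\sigma_i(r)\delta_i(s)+\delta_i(r)s$, which is precisely the $\sigma_i$-derivation identity. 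The identity $x_i\cdot 1=1\cdot x_i+0$ yields $\sigma_i(1)=1$ and $\delta_i(1)=0$, so $\sigma_i$ is a ring endomorphism.

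Finally, injectivity of $\sigma_i$ follows immediately from the nonvanishing clause built into condition (iii): if $r\neq 0$, then $c_{i,r}\in R-\{0\}$, i.e.\ $\sigma_i(r)\neq 0$, so $\ker\sigma_i=0$. I do not expect any real obstacle here; the entire argument is driven by the uniqueness of coefficients in the free module expansion, and the only subtlety is taking care of the case $r=0$ (which condition (iii) excludes) by hand at the outset. The $\sigma_i$-derivation property is then a free byproduct of the same uniqueness argument used for multiplicativity of $\sigma_i$.
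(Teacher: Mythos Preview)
Your argument is correct and is exactly the standard one: the paper does not reproduce a proof here but simply cites \cite{LezamaGallego}, Proposition~3, whose proof proceeds precisely by reading off $\sigma_i$ and $\delta_i$ from the unique expansion of $x_ir$ in the left $R$-basis $\mathrm{Mon}(A)$ and then comparing coefficients. There is nothing to add.
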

\begin{proof}
See \cite{LezamaGallego}, Proposition 3.
\end{proof}

A particular case of skew $PBW$ extension is when all derivations $\delta_i$ are zero. Another
interesting case is when all $\sigma_i$ are bijective and the constants $c_{ij}$ are invertible. We
recall the following definition (cf. \cite{LezamaGallego}).
\begin{definition}\label{sigmapbwderivationtype}
Let $A$ be a skew $PBW$ extension.
\begin{enumerate}
\item[\rm (a)]
$A$ is quasi-commutative if the conditions {\rm(}iii{\rm)} and {\rm(}iv{\rm)} in Definition
\ref{gpbwextension} are replaced by
\begin{enumerate}
\item[\rm (iii')]For every $1\leq i\leq n$ and $r\in R-\{0\}$ there exists $c_{i,r}\in R-\{0\}$ such that
\begin{equation}
x_ir=c_{i,r}x_i.
\end{equation}
\item[\rm (iv')]For every $1\leq i,j\leq n$ there exists $c_{i,j}\in R-\{0\}$ such that
\begin{equation}
x_jx_i=c_{i,j}x_ix_j.
\end{equation}
\end{enumerate}
\item[\rm (b)]$A$ is bijective if $\sigma_i$ is bijective for
every $1\leq i\leq n$ and $c_{i,j}$ is invertible for any $1\leq i<j\leq n$.
\end{enumerate}
\end{definition}

\begin{definition}\label{1.1.6}
Let $A$ be a skew $PBW$ extension of $R$ with endomorphisms
$\sigma_i$, $1\leq i\leq n$, as in Proposition
\ref{sigmadefinition}.
\begin{enumerate}
\item[\rm (i)]For $\alpha=(\alpha_1,\dots,\alpha_n)\in \mathbb{N}^n$,
$\sigma^{\alpha}:=\sigma_1^{\alpha_1}\cdots \sigma_n^{\alpha_n}$,
$|\alpha|:=\alpha_1+\cdots+\alpha_n$. If
$\beta=(\beta_1,\dots,\beta_n)\in \mathbb{N}^n$, then
$\alpha+\beta:=(\alpha_1+\beta_1,\dots,\alpha_n+\beta_n)$.
\item[\rm (ii)]For $X=x^{\alpha}\in Mon(A)$,
$\exp(X):=\alpha$ and $\deg(X):=|\alpha|$.
\item[\rm (iii)]Let $0\neq f\in A$, $t(f)$ is the finite
set of terms that conform $f$, i.e., if $f=c_1X_1+\cdots +c_tX_t$,
with $X_i\in Mon(A)$ and $c_i\in R-\{0\}$, then
$t(f):=\{c_1X_1,\dots,c_tX_t\}$.
\item[\rm (iv)]Let $f$ be as in {\rm(iii)}, then $\deg(f):=\max\{\deg(X_i)\}_{i=1}^t.$
\end{enumerate}
\end{definition}

The next theorems establish some results for skew $ PBW $ extensions that we will use later, for
their proofs see \cite{Oswaldo}.

\begin{theorem}\label{filteredskew}
Let $ A $ be an arbitrary skew $PBW$ extension of the ring $ R $. Then, $ A $ is a filtered ring with filtration given by
$$F_{m}:=
\begin{cases}
R, & \text{ if } m=0\\
\{f \in A| deg(f) \leq m\}, & \text{  if } m \geq 1
\end{cases}$$
and the corresponding graded ring $ Gr(A) $ is a quasi-commutative skew $ PBW $ extension of $ R $.
Moreover, if $ A $ is bijective, then $ Gr(A) $ is quasi-commutative bijective skew $ PBW $
extension of $ R $.
\end{theorem}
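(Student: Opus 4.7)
The plan is to establish the two assertions in sequence: first that $\{F_m\}_{m\geq 0}$ defines a ring filtration on $A$, and then to read off the structure of $Gr(A)$ directly from Definition \ref{gpbwextension} by passing to leading terms.

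For the filtration, I would observe that monotonicity $F_m\subseteq F_{m+1}$ and exhaustivity $\bigcup_{m\geq 0}F_m = A$ follow immediately from the definition of $\deg$ together with condition (ii). The nontrivial point is $F_m\cdot F_k\subseteq F_{m+k}$. By $R$-linearity and the basis property, this reduces to checking that $x^{\alpha}\cdot x^{\beta}\in F_{|\alpha|+|\beta|}$ and $x^{\alpha}\cdot r\in F_{|\alpha|}$ for $\alpha,\beta\in\mathbb{N}^n$ and $r\in R$. Both facts are proved by induction on $|\alpha|$, using relation (\ref{sigmadefinicion1}) to commute scalars past $x_i$ (producing a correction in $R\subseteq F_0$) and relation (\ref{sigmadefinicion2}) to reorder the variables (producing corrections in $F_1$). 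In each case the correction sits in strictly lower filtered degree, so the total degree is controlled.

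Next, for the graded ring, write $y_i$ for the image of $x_i$ in $F_1/F_0\subseteq Gr(A)$. Because $Mon(A)$ is an $R$-basis of $A$ and $F_m$ is the $R$-span of those $x^{\alpha}$ with $|\alpha|\leq m$, the quotient $F_m/F_{m-1}$ is a free left $R$-module with basis $\{\overline{x^{\alpha}}:|\alpha|=m\}$. An induction using the relations (\ref{sigmadefinicion2}) modulo $F_1$ identifies $\overline{x^{\alpha}}$ with $y_1^{\alpha_1}\cdots y_n^{\alpha_n}$, so $Gr(A)$ is left $R$-free on $\{y^{\alpha}:\alpha\in\mathbb{N}^n\}$, verifying condition (ii) for $Gr(A)$.

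The quasi-commutative relations then fall out by reduction modulo the appropriate filtered piece. From $x_ir-\sigma_i(r)x_i=\delta_i(r)\in R\subseteq F_0$, passing to $F_1/F_0$ yields $y_i r=\sigma_i(r)y_i$, which is (iii') with $c_{i,r}=\sigma_i(r)$. From $x_jx_i-c_{i,j}x_ix_j\in R+Rx_1+\cdots+Rx_n\subseteq F_1$, passing to $F_2/F_1$ yields $y_jy_i=c_{i,j}y_iy_j$, which is (iv'). Since the endomorphisms $\sigma_i$ and the constants $c_{i,j}$ of $Gr(A)$ coincide with those of $A$, the bijective statement is immediate: if $A$ is bijective then every $\sigma_i$ is bijective and every $c_{i,j}$ (for $i<j$) is invertible, hence the same holds in $Gr(A)$.

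The main obstacle I anticipate is the careful bookkeeping in the inductive step of $F_m\cdot F_k\subseteq F_{m+k}$: one must track that every correction produced by $\delta_i$ and by the remainder terms in (iv) strictly decreases the filtered degree, and that the induction hypothesis can then absorb the whole correction into $F_{m+k-1}$. Once this degree bound is clean, the rest of the argument is essentially a formal reading-off of leading terms, and the identification of $Gr(A)$ as a quasi-commutative skew $PBW$ extension is direct.
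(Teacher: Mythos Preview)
The paper does not supply its own proof of this theorem; it simply refers the reader to \cite{Oswaldo}. Your direct argument is correct and is essentially the standard proof found there: one verifies $F_m\cdot F_k\subseteq F_{m+k}$ by induction on degree using relations (\ref{sigmadefinicion1}) and (\ref{sigmadefinicion2}), and then reads off the quasi-commutative relations (iii$'$), (iv$'$) in $Gr(A)$ by reducing those same relations modulo $F_0$ and $F_1$ respectively, with the bijective case following because the $\sigma_i$ and $c_{i,j}$ are unchanged in passing to $Gr(A)$.
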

\begin{theorem}\label{skewore}
Let $ A $ be a quasi-commutative skew $ PBW $ extension of a ring $ R $. Then,
\begin{enumerate}
\item $ A $ is isomorphic to an iterated skew polynomial ring of endomorphism type, i.e.,
$$ A \cong R[z_{1};\theta_{1}]\dots[z_{n};\theta_{n}].$$
\item If $ A $ is bijective, then each endomorphism $ \theta_{i} $ is bijective, $ 1\leq i\leq n $.
\end{enumerate}
\end{theorem}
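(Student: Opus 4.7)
The plan is to construct the iterated skew polynomial ring step by step on the right-hand side, define a natural map from it to $A$ sending $z_i\mapsto x_i$, and use the standard monomial basis to prove it is an isomorphism. Quasi-commutativity (Definition \ref{sigmapbwderivationtype}(a)) gives $x_ir=c_{i,r}x_i=\sigma_i(r)x_i$ (so the derivations $\delta_i$ vanish) and $x_jx_i=c_{i,j}x_ix_j$ for $i<j$, which are precisely the relations one needs in a skew polynomial ring of endomorphism type.

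For part (1), I would build $B_0:=R$ and, inductively, $B_i:=B_{i-1}[z_i;\theta_i]$, where $\theta_i$ is an endomorphism of $B_{i-1}$ to be defined so that the skew relation $z_iw=\theta_i(w)z_i$ inside $B$ mirrors the relation $x_iw=\cdots$ inside $A$. The guess is forced: $\theta_i|_R=\sigma_i$ and $\theta_i(z_k)=c_{k,i}z_k$ for $k<i$ (comparing with $x_ix_k=c_{k,i}x_kx_i$ after a relabelling convention). The core technical step is to check that each $\theta_i$ really extends to a well-defined ring endomorphism of $B_{i-1}$, which reduces to verifying the compatibility $\theta_i(\theta_k(r))\cdot c_{k,i}=c_{k,i}\cdot \theta_k(\theta_i(r))$ for $r\in R$ and $k<i$, and similarly between the $z_k$'s. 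These identities are exactly the consistency relations forced on $A$ by the two quasi-commutative equations together with Proposition \ref{sigmadefinition}; I would derive them by computing $x_i(x_kr)$ in two ways using both relations. This compatibility check is where I expect the main bookkeeping to live.

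Once $B=R[z_1;\theta_1]\cdots[z_n;\theta_n]$ is built, the universal property of iterated skew polynomial rings yields a unique ring homomorphism $\phi\colon B\to A$ that is the identity on $R$ and sends $z_i\mapsto x_i$: the required equations $\phi(z_i)\phi(w)=\phi(\theta_i(w))\phi(z_i)$ hold in $A$ by construction of the $\theta_i$'s. Surjectivity is immediate since $R$ and the $x_i$ generate $A$. For injectivity, note that $B$ has the $R$-basis $\{z_1^{\alpha_1}\cdots z_n^{\alpha_n}:\alpha\in\mathbb{N}^n\}$ (by iteration of the PBW theorem for single-variable skew polynomial rings), and $\phi$ sends this basis bijectively to $\mathrm{Mon}(A)$, which is an $R$-basis of $A$ by Definition \ref{gpbwextension}(ii). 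Hence $\phi$ is a left $R$-module isomorphism and therefore a ring isomorphism.

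For part (2), assume $A$ bijective, so every $\sigma_i$ is bijective on $R$ and every $c_{i,j}$ ($i<j$) is a unit of $R$. I would prove by induction on $i$ that $\theta_i$ is bijective on $B_{i-1}$. On $R$ it equals $\sigma_i$, hence bijective. On the generators $z_k$ with $k<i$, $\theta_i(z_k)=c_{k,i}z_k$ has the candidate inverse $z_k\mapsto c_{k,i}^{-1}z_k$ (adjusted by $\sigma_i^{-1}$ on coefficients); the invertibility of the $c_{k,i}$ and of $\sigma_i$ makes the inverse well-defined as an endomorphism, and the inductive compatibility conditions carry over verbatim because they are stable under inverting invertible quantities. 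This gives bijectivity of each $\theta_i$ and completes the theorem.
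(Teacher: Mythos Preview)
The paper does not actually prove this theorem; the preamble to Theorems~\ref{filteredskew}--1.7 reads ``for their proofs see \cite{Oswaldo}'', so there is nothing here to compare against directly. Your proposal is the standard construction and is correct: the definition $\theta_i|_R=\sigma_i$, $\theta_i(z_k)=c_{k,i}z_k$ for $k<i$ is exactly what the paper later invokes (see the parenthetical remark in the proof of Theorem~\ref{CMskew}, which records that $\theta_j(r)=\sigma_j(r)$ for $r\in R$ ``according to the proof of Theorem~\ref{skewore} in \cite{Oswaldo}''). The compatibility identities you flag---$c_{k,i}\,\sigma_k(\sigma_i(r))=\sigma_i(\sigma_k(r))\,c_{k,i}$ and the analogous relation among the $c$'s---do follow from associativity of $A$ on the monomial basis, just as you outline; and the basis-to-basis argument for bijectivity of $\phi$ is the clean way to finish part~(1). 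For part~(2), your inverse construction $z_k\mapsto c_{k,i}^{-1}z_k$ together with $\sigma_i^{-1}$ on $R$ is correct, and the same associativity computations (now with units inverted) show it is a ring endomorphism. In short, your approach is essentially the one the cited reference carries out.
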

\begin{theorem}[Hilbert Basis Theorem] Let $ A $ be a bijective skew $ PBW $ extension of $ R $. If $ R $ is a left (right) Noetherian ring then $ A $ is also a left (right) Noetherian ring.

\end{theorem}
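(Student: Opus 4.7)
The plan is to prove this by combining the filtered-graded machinery of Theorem \ref{filteredskew} with the iterated skew polynomial description of Theorem \ref{skewore}, and then lifting Noetherianity from the associated graded ring to $A$ via the standard filtered-to-graded principle.

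First, I would apply Theorem \ref{filteredskew} to the bijective skew $PBW$ extension $A$, obtaining the positive increasing filtration $\{F_m\}_{m\geq 0}$ with $F_0=R$, $\bigcup_m F_m=A$, such that the associated graded ring $Gr(A)$ is a \emph{quasi-commutative bijective} skew $PBW$ extension of $R$. Then I would invoke Theorem \ref{skewore}(2) to identify $Gr(A)$ with an iterated skew polynomial ring $R[z_1;\theta_1]\cdots[z_n;\theta_n]$ in which every $\theta_i$ is a ring \emph{automorphism} of the previous stage (this is where bijectivity is used in an essential way).

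Next, I would invoke the classical Hilbert basis theorem for skew polynomial rings of automorphism type: if $S$ is left (right) Noetherian and $\theta$ is a ring automorphism of $S$, then $S[z;\theta]$ is again left (right) Noetherian. Applying this iteratively $n$ times, starting from the hypothesis that $R$ is left (right) Noetherian, yields that $Gr(A)=R[z_1;\theta_1]\cdots[z_n;\theta_n]$ is left (right) Noetherian.

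Finally, I would lift Noetherianity from $Gr(A)$ to $A$ using the well-known filtered-to-graded result (see, e.g., McConnell--Robson or Li--Van Oystaeyen): a ring equipped with a positive, exhaustive filtration whose associated graded ring is left (right) Noetherian is itself left (right) Noetherian. Since the filtration of Theorem \ref{filteredskew} has $F_0=R$ and $\bigcup_m F_m = A$, this applies and gives the conclusion. The only step requiring care is verifying that the filtration satisfies the hypotheses of the lifting theorem (positivity and exhaustiveness), which is immediate from the form of $F_m$ in Theorem \ref{filteredskew}; the substantive algebraic content is bundled into the bijectivity needed to get automorphisms in the iterated skew polynomial presentation, without which the classical Hilbert basis step for skew polynomials would fail.
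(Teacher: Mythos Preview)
Your approach is correct. Note, however, that the paper does not actually supply its own proof of this theorem: it is one of the three results imported from \cite{Oswaldo} with the blanket remark ``for their proofs see \cite{Oswaldo}.'' So there is no in-paper argument to compare against directly.

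That said, your strategy is precisely the one the paper itself deploys elsewhere. In the proof of Lemma~\ref{skewZariski} the authors argue that $Gr(A)\cong R[z_1;\theta_1]\cdots[z_n;\theta_n]$ with each $\theta_i$ bijective (via Theorems~\ref{filteredskew} and~\ref{skewore}) and conclude ``whence $Gr(A)$ is Noetherian''---exactly your steps (1)--(3). Your step (4), lifting Noetherianity along a positive exhaustive filtration, is the standard filtered-to-graded principle and is implicit in the Zariskian machinery the paper invokes (Proposition~\ref{Caracterizacionzariski}). So your proof is sound, self-contained modulo the classical skew Hilbert basis theorem and the filtered lifting lemma, and fully consistent with the paper's toolkit; it simply makes explicit an argument the paper leaves to the cited reference.
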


\subsection{Examples}\label{examples}
In order to understand the importance of results of the next three sections, next we recall some
examples of skew $PBW$ extensions; for more details see \cite{Oswaldo} and \cite{Reyes}. $K$
denotes a field.

\begin{example}[\textbf{$ PBW $ extensions}, \cite{Bell}]
Any $ PBW $ extension is a bijective skew $ PBW $ extension since in this case $ \sigma_{i}=i_{R} $ for each $ 1 \leq i \leq n $, and $ c_{i,j}=1 $ for every $ 1 \leq i,j \leq n $. Thus, for $ PBW $ extensions we have $ A=i(R)\langle x_{1}, \dots, x_{n}\rangle $. Some examples of $ PBW $ extensions are the following:
\begin{enumerate}
\item[(a)] The \textit{habitual polynomial ring} $ A=R[t_{1},\dots,t_{n}] $.
\item[(b)] Any \textit{skew polynomial ring of derivation type} $ A=R[x;\sigma,\delta] $, i.e., with $ \sigma=i_{R} $. In general, any \textit{Ore extension of derivation type} $ R[x_{1};\sigma_{1},\delta_{1}]\cdots [x_{n};\sigma_{n},\delta_{n}] $, i.e., such that $ \sigma_{i}=i_{R} $, for any $ 1 \leq i \leq n $.
\item[(c)] The \textit{Weyl algebra} $ A_{n}(K):=K[t_{1},\dots,t_{n}][x_{1};\partial/\partial t_{1}] \cdots [x_{n};\partial / \partial t_{n}] $. The \textit{extended Weyl algebra} $ B_{n}(K):=K(t_{1},\dots,t_{n})[x_{1};\partial/\partial t_{1}] \cdots [x_{n};\partial / \partial t_{n}] $, where $ K(t_{1},\dots,t_{n}) $ is the field of fractions of $ K[t_{1},\dots,t_{n}] $.
\item[(d)] The \textit{universal enveloping algebra} of a finite dimensional Lie algebra $ \mathcal{U}(\mathcal{G}) $. In this case, $ x_{i}r-rx_{i}=0 $ and $ x_{i}x_{j}-x_{j}x_{i}=[x_{i},x_{j}]\in \mathcal{G}=K+Kx_{1}+\dots+Kx_{n} $, for any $ r \in K $ and $ 1 \leq i,j\leq n $.
\item[(e)] The \textit{crossed product} $ R \ast \mathcal{U}(\mathcal{G}) $, in particular, the tensor product $ A:=R \otimes_{K}\mathcal{U}(\mathcal{G}) $ is a $ PBW $ extension of $ R $, where $ R $ is a $ K $-algebra.
\end{enumerate}
\end{example}
\begin{example}[\textbf{Ore extensions of bijective type}]
Any \textit{skew polynomial ring} $ R[x;\sigma,\delta] $ of bijective type, i.e., with $ \sigma $
bijective, is a bijective skew $ PBW $ extension. In this case we have $ R[x;\sigma,\delta]\cong
\sigma(R)\langle x \rangle $. If additionally $ \delta=0 $, then $ R[x;\sigma] $ is
quasi-commutative. More generally, let $
R[x_{1};\sigma_{1},\delta_{1}]\cdots[x_{n};\sigma_{n},\delta_{n}] $ be an \textit{iterated skew
polynomial ring of bijective type}, i.e., the following conditions hold:
\begin{itemize}
\item for $ 1\leq i \leq n $, $ \sigma_{i} $ is bijective;
\item for every $ r \in R $ and $ 1 \leq i \leq n $, $ \sigma_{i}(r) \in R$,  $ \delta_{i}(r) \in R$;
\item for $ i < j $, $ \sigma_{j}(x_{i})=cx_{i}+d $, with $ c,d\in R $ and $ c $ has a left inverse;
\item for $ i<j $, $ \delta_{j}(x_{i})\in R+Rx_{1}+\cdots+Rx_{n} $,
\end{itemize}
then $ R[x_{1};\sigma_{1},\delta_{1}]\cdots[x_{n};\sigma_{n},\delta_{n}]  $ is a bijective skew $ PBW $ extension. Under those conditions we have
$$ R[x_{1};\sigma_{1},\delta_{1}]\cdots[x_{n};\sigma_{n},\delta_{n}] \cong \sigma(R)\langle x_{1},\dots,x_{n} \rangle. $$
Some concrete examples of Ore algebras of bijective type are the following:
\begin{enumerate}
\item[(a)] The \textit{algebra of $ q- $differential operators} $ D_{q,h}[x,y] $.
\item[(b)] The \textit{algebra of shift operators} $ S_{h} $.
\item[(c)] The \textit{mixed algebra} $ D_{h} $.
\item[(d)] The \textit{algebra for multidimensional discrete linear systems}.

\end{enumerate}
\end{example}

\begin{example}[\textbf{Operator algebras}]
Some important and well-known operator algebras like:
\begin{enumerate}
\item[(a)] \textit{Algebra of linear partial differential operators}.
\item[(b)] \textit{Algebra of linear partial shift operators}.
\item[(c)] \textit{Algebra of linear partial difference operators}.
\item[(d)] \textit{Algebra of linear partial $ q- $dilations operators}.
\item[(e)] \textit{Algebra of linear partial $ q- $differential operators}.
\end{enumerate}
\end{example}
\begin{example}[\textbf{Diffusion algebras}] A \textit{diffusion algebra } $ A $ is generated by $ \{D_{i},x_{i}|1 \leq i \leq n \} $ over $ K $ with relations
\begin{equation*}
x_{i}x_{j}=x_{j}x_{i}, \qquad x_{i}D_{j}=D_{j}x_{i}, \qquad 1 \leq i,j \leq n.
\end{equation*}
\begin{equation*}
c_{ij}D_{i}D_{j}-c_{ji}D_{j}D_{i}=x_{j}D_{i}-x_{i}D_{j}, \qquad i<j, c_{ij},c_{ji} \in K^{*}
\end{equation*}
Thus, $ A \cong \sigma(K[x_{1},\dots,x_{n}])\langle D_{1},\dots, D_{n}\rangle $.
\end{example}
\begin{example}[\textbf{Quantum algebras}] Some important examples of Quantum algebras are the following:
\begin{enumerate}
\item [(a)] \textit{Additive analogue of the Weyl algebra}.
\item[(b)] \textit{Multiplicative analogue of the Weyl algebra}.
\item[(c)] \textit{Quantum algebra} $ \mathcal{U}^{'}(so(3,K)) $.
\item[(d)] \textit{$ 3- $dimensional skew polynomial algebra $ A $}.
\item[(e)] \textit{Dispin algebra} $ \mathcal{U}(osp(1,2)) $.
\item[(f)] \textit{Woronowicz algebra} $ \mathcal{W}_{v}(sl(2,K)) $.
\item[(g)] The \textit{complex algebra} $ V_{q}(sl_{3}(\mathbb{C})) $.
\item[(h)] The \textit{algebra $ U $}.
\item[(i)] The \textit{coordinate algebra of the quantum matrix space } $ M_{q}(2) $.
\item[(j)] \textit{$ q- $Heisenberg algebra}.
\item[(k)] \textit{Quantum enveloping algebra of } $ sl(2,K) $, $ \mathcal{U}_{q}(sl(2,K)) $.
\item[(l)] \textit{Hayashi algebra }$ W_{q}(J) $.
\item[(m)] \textit{Quantum Weyl algebra of Maltsiniotis} $ A_{n}^{q,\lambda} $.
\item[(n)] The \textit{algebra of differential operators} $ D_{q}(S_{q}) $ \textit{on a quantum space} $ S_{q} $.
\item[(0)] \textit{Witten's deformation of } $ \mathcal{U}(sl(2,K)) $.
\item[(p)] \textit{Quantum Weyl algebra $ A_{n}(q,p_{i,j}) $}.
\item[(q)] \textit{Multiparameter quantized Weyl algebra} $ A_{n}^{Q,\Gamma}(K) $.
\item[(r)] \textit{Quantum symplectic space} $ \mathcal{O}_{q}(sp(K^{2n})) $.
\end{enumerate}

\end{example}
\begin{example}[\textbf{Quadratic algebras in 3 variables}] A quadratic algebra in $ 3 $ variables $ A $ is a $ K- $algebra generated by $ x,y,z $ subject to the relations
\begin{align*}
yx&= xy+a_{1}z+a_{2}y^{2}+a_{3}yz+\xi_{1}z^{2},\\
zx&= xz+\xi_{2}y^{2}+a_{5}yz+a_{6}z^{2},\\
zy&= yz+a_{4}z^{2}.
\end{align*}
These algebras are examples of bijective skew $ PBW $ extensions.

\end{example}
\begin{example}[$ n- $\textit{multiparametric skew quantum space} over $ R $]
Let $ R $ be a ring with a fixed matrix of parameters $  \textbf{q} := [q_{ij}]\in M_{n}(R)$, $ n\geq 2 $, such that $ q_{ii}=1=q_{ij}q_{ji} $ for every $ 1\leq i, j\leq n $, and suppose also that it is given a system $ \sigma_{1}, \dots,\sigma_{n} $ of automorphisms of $ R $. The quasi-commutative bijective skew $ PBW $ extension $ R_{q,\sigma}[x_{1},\dots,s_{n}] $ defined by
\[x_{j}x_{i}=q_{ij}x_{i}x_{j}, \quad x_{i}r=\sigma_{i}(r)x_{i}, \quad r \in R, \quad 1 \leq i,j \leq n,  \]
is called the $ n- $\textit{multiparametric skew quantum space} over $ R $. When all automorphisms of the extension are trivial, i.e., $ \sigma_{i}=i_{R} $, $ 1\leq i\leq n $ it is called $ n- $\textit{multiparametric quantum space} over $ R $. If $ R=K $ is a field, then $ K_{q,\sigma}[x_{1},\dots,x_{n}] $ is  called $ n- $\textit{multiparametric skew quantum space}, and the case particular case $ n=2 $ is called \textit{skew quantum plane}, for trivial automorphisms we have the $ n- $\textit{multiparametric quantum space} and the \textit{quantum plane}.\label{key}
\end{example}

\begin{example}[Skew quantum polynomials]\label{skewquantum}
Let $ R $ be a ring with a fixed matrix of parameters $  \textbf{q} := [q_{ij}]\in M_{n}(R)$, $ n\geq 2 $, such that $ q_{ii}=1=q_{ij}q_{ji} $ for every $ 1\leq i, j\leq n $, and suppose also that it is given a system $ \sigma_{1}, \dots,\sigma_{n} $ of automorphisms of $ R $. The ring of \textbf{skew quantum polynomials} over $ R $, denoted by $ Q_{q,\sigma}^{r,n}:=R_{q,\sigma}[x_{1}^{\pm 1},\dots,x_{r}^{\pm1},x_{r+1},\dots,x_{n}] $, is defined as follows:
\begin{enumerate}
\item[\rm (i)] $ R \subseteq R_{q,\sigma}[x_{1}^{\pm 1},\dots,x_{r}^{\pm1},x_{r+1},\dots,x_{n}] $.
\item[\rm (ii)] $ R_{q,\sigma}[x_{1}^{\pm 1},\dots,x_{r}^{\pm1},x_{r+1},\dots,x_{n}] $ is a free left $ R- $module with basis
\[ \{x_{1}^{\alpha_{1}}\cdots x_{n}^{\alpha_{n}}|\alpha_{i}\in \mathbb{Z} \text{ for }1 \leq i \leq r \text{ and } \alpha_{i}\in \mathbb{N} \text{ for } r+1\leq i \leq n\}.\]
\item[\rm (iii)] The elements $ x_{1},\dots,x_{n} $ satisfy the defining relations
\[x_{i}x_{i}^{-1}=1=x_{i}^{-1}x_{i}, \quad 1 \leq i\leq r,\]
\[x_{j}x_{i}=q_{ij}x_{i}x_{j}, \quad x_{i}r=\sigma_{i}(r)x_{i}, \quad r \in R, \quad 1 \leq i, j \leq n.\]
\end{enumerate}
$ Q_{q,\sigma}^{r,n}$  can be viewed as a localization of a skew $ PBW $ extension or can be defined as a quasi-commutative bijective skew $ PBW $ extension of the $ r- $multiparameter quantum torus (see \cite{Oswaldo}).
\end{example}
\begin{example}[Sklyanin algebra]\label{Sklyanin}
Let $ a,b,c \in K $, then
\[S=K\langle x,y,z\rangle/\langle ayx+bxy+cz^{2},axz+bzx+cy^{2},azy+byz+cx^{2}\rangle\]
is skew $ PBW $ extension if $ c=0 $ and $ a,b\neq 0 $, thus $ S=\sigma(K)\langle x,y,z\rangle $:
\[yx=-\frac{b}{a}xy, \quad zx=-\frac{a}{b}xz, \quad zy=-\frac{b}{a}yz.\]
\end{example}

\begin{example}[3-dimensional skew polynomial algebra]
$\mathcal{A}$ is the $K$-algebra generated by the variables $x,y,z$ restricted to relations
\begin{center}
$yz-\alpha zy=\lambda; zx-\beta xz=\mu; xy-\gamma yx=\nu$;
\end{center}
with $\lambda,\mu,\nu\in K+Kx+Ky+Kz$ and $\alpha,\beta,\gamma\in K-\{0\}$.
\end{example}

\section{Auslander regularity conditions}

\noindent In this section we will study the Auslander regularity and the Auslander-Gorenstein
conditions for skew $ PBW $ extensions. The first condition was studied by Björk in \cite{Bjorkj},
\cite{Bjork} and by Ekström in \cite{Ekstrom} for filtered Zariski rings and Ore extensions;  in
\cite{Reyes} was studied the Auslander condition for skew $PBW$ extensions. We will consider the
second condition, but for completeness, we will integrate the first one in the statements of the
results below.
\begin{definition}
Let $ A $ be a ring.
\begin{enumerate}
\item[\rm (i)] The grade $j(M)$ of a left (or right) $A-$module $M$ is defined by
$$j(M) := min\{i|Ext^{i}_{A}
(M, A) \neq  0\}$$
or $ \infty $ if no such $ i $ exists.

\item[\rm (ii)]
 $A$ satisfies the \textit{Auslander} condition if for every Noetherian left (or right) $A-$module $M$ and for all $i \geq 0$
$$j(N)  \geq i \text{ for all submodules } N \subseteq Ext^{i}_{A}(M, A).$$
\item [\rm (iii)] $A$ is \textit{Auslander-Gorenstein} (AG) if $A$ is Noetherian (i.e., two-sided Noetherian), which satisfies the
Auslander condition, $id(_{A}A)< \infty$, and $id(A_{A})<\infty$.
\item[\rm (iv)] $A$ is \textit{Auslander regular} (AR) if it is AG and $gld(A) < \infty$.
\end{enumerate}
\end{definition}
\begin{remark}
\begin{enumerate}
\item[\rm (i)] We recall that if a ring $ A $ is Noetherian, then $ gld(A):=rgld(A)=lgld(A) $.
\item[\rm (ii)] If $ A $ is Noetherian and if $ id(_{A}A)<\infty $ and $ id(A_{A})<\infty $, then $ id(_{A}A)=id(A_{A}) $ (see \cite{Zaks}).
\end{enumerate}
\end{remark}
\begin{definition}
Let $ A $ be a filtered ring with filtration $ \{F_{n}(A)\}_{n \in \mathbb{Z}} $.
\begin{enumerate}
\item The Rees ring associated to $ A $ is a graded ring defined by
$$\widetilde{A}:=\bigoplus_{n \in \mathbb{Z}}F_{n}(A).$$
\item The filtration $ \{F_{n}(A)\}_{n \in \mathbb{Z}} $ is left (right) Zariskian, and $ A $ is called a left
(right) Zariski ring, if $ F_{-1}(A)\subseteq Rad(F_{0}(A)) $ and the associated Rees ring $ \widetilde{A}  $ is left (right) Noetherian.
\end{enumerate}
\end{definition}
The following result is a characterizations of the Zariski property (see \cite{Li2}).
\begin{proposition}\label{Caracterizacionzariski}
Let $ A $ be a $ \mathbb{N} $-filtered ring such that $ Gr(A) $ is left (right)Noetherian. Then, $ A $ is left (right) Zariskian.
\end{proposition}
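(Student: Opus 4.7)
Since $A$ is $\mathbb{N}$-filtered, $F_{-1}(A) = 0$, so the first Zariski condition $F_{-1}(A)\subseteq Rad(F_0(A))$ is automatic, and only the left (right) Noetherianity of the Rees ring $\widetilde{A}=\bigoplus_{n\ge 0}F_n(A)$ requires a proof. I will argue the left case; the right case is symmetric.

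My strategy is to exploit the canonical homogeneous central element $T \in \widetilde{A}_1$ coming from $1\in F_1(A)$: it is a non-zerodivisor (the degree-$n$ component of $T\widetilde{A}$ is $F_{n-1}\subseteq F_n$), it gives $\widetilde{A}/T\widetilde{A}\cong Gr(A)$, and the $T$-adic topology on $\widetilde{A}$ is separated because in degree $m$ the submodule $T^n\widetilde{A}$ equals $F_{m-n}$, which is $0$ as soon as $n>m$ by the $\mathbb{N}$-filtration hypothesis. Since $\widetilde{A}$ is $\mathbb{N}$-graded, it suffices to show every graded left ideal is finitely generated.

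Let $J\subseteq \widetilde{A}$ be a graded left ideal. The image $\pi(J)=(J+T\widetilde{A})/T\widetilde{A}\subseteq Gr(A)$ is graded and finitely generated by hypothesis, say by $\bar{a}_1,\dots,\bar{a}_s$, which I lift to homogeneous $a_i\in J$. For a homogeneous $x\in J$ of degree $d$, writing $\pi(x)=\sum \bar{r}_i\bar{a}_i$ and lifting the $\bar{r}_i$ to $r_i\in \widetilde{A}$, one obtains $x-\sum r_i a_i = Ty_1$ for some homogeneous $y_1$ of degree $d-1$. The trouble is that $y_1$ need not belong to $J$, so the recursion does not close naively.

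The main obstacle is to close this iteration. The key tool is the ascending chain of graded left ideals $J\subseteq (J:T)\subseteq (J:T^2)\subseteq\cdots$ in $\widetilde{A}$: its image in $Gr(A)$ stabilizes at some step $N$ by Noetherianity of $Gr(A)$. Combining this stabilization with the $T$-adic separation and the crucial constraint that homogeneous elements of $\widetilde{A}$ of negative degree are zero, I expect the iteration to terminate after at most $d$ steps for a homogeneous $x$ of degree $d$, yielding $x\in \sum_i \widetilde{A}a_i$. Once this graded version is proved, the left Noetherianity of $\widetilde{A}$ follows from the standard reduction of Noetherianity to graded Noetherianity for $\mathbb{N}$-graded rings, and $A$ is a left Zariski ring.
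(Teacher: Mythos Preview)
The paper does not prove this proposition at all; it simply records it as a known characterization and cites Li's thesis \cite{Li2}. So there is no ``paper's own proof'' to compare against, and any correct direct argument would already go beyond what the paper offers.

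Your strategy is the standard one and is on the right track, but the final claim is wrong as stated. You assert that, using stabilization of the images of $(J:T^k)$ in $Gr(A)$ together with degree termination, the iteration for a homogeneous $x\in J$ ends with $x\in\sum_i\widetilde{A}a_i$, where the $a_i$ are lifts of generators of $\pi(J)$. This fails already for $\widetilde{A}=K[T]$ (i.e.\ $A=K$ with the trivial $\mathbb{N}$-filtration) and $J=T^2\widetilde{A}$: here $\pi(J)=0$, so $\sum_i\widetilde{A}a_i=0$, yet $J\neq 0$. The iteration does terminate in $d$ steps, but the remainders $y_k$ live in successively larger ideals $(J:T^k)$, not in $J$, so what you actually obtain is $x\in (J:T^{d})$, not $x\in\sum_i\widetilde{A}a_i$.

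The fix is to run your argument in two stages. First, your degree-termination idea does show that the chain $(J:T^k)$ stabilizes in $\widetilde{A}$ at some $K:=(J:T^{N})$, not merely in $Gr(A)$: if $x\in(J:T^{N+1})$ is homogeneous of degree $d$, writing $x=y_0+Tz_1$ with $y_0\in(J:T^{N})$ and iterating forces $z_{d+1}=0$, so $x\in(J:T^{N})$. Now $K$ is $T$-saturated, so for $K$ the naive recursion \emph{does} close, giving $K=\sum_j\widetilde{A}b_j$ for lifts $b_j$ of generators of $\pi(K)$. Finally, $T^{N}K\subseteq J\subseteq K$, and $K/T^{N}K$ is a finitely generated module over $\widetilde{A}/T^{N}\widetilde{A}$, which is left Noetherian because it has a finite filtration with subquotients isomorphic to $Gr(A)$; hence $J/T^{N}K$ is finitely generated, and therefore so is $J$. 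With this correction your outline becomes a valid proof.
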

\begin{proposition}\label{GradedZariski}
Let $ A $ be a left and right Zariski ring. If its associated graded ring $ Gr(A) $ is $ AG $, respectively $ AR $, then so too is $ A $.
\end{proposition}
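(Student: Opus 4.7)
The plan is to lift each of the four defining ingredients of $AG$ (resp.\ $AR$) separately from $Gr(A)$ to $A$, using the Zariskian hypothesis as the bridge between a filtered $A$-module and its associated graded $Gr(A)$-module.

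First, two-sided Noetherianity of $A$ follows quickly: since $Gr(A)$ is Noetherian by hypothesis and $A$ is Zariski on both sides, the Rees ring $\widetilde{A}$ is Noetherian, and a standard dehomogenization argument (a good filtration on any finitely generated $A$-module yields a finitely generated $Gr(A)$-module) produces the ascending chain condition on $A$ on both sides.

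Next I would show that $id({}_{A}A), id(A_{A}) < \infty$. The classical route is through the Rees ring: from the homogeneous short exact sequence
\[ 0 \longrightarrow \widetilde{A}(-1) \longrightarrow \widetilde{A} \longrightarrow Gr(A) \longrightarrow 0 \]
(multiplication by the canonical central degree-one element) one obtains $id(\widetilde{A}) \leq id(Gr(A)) + 1 < \infty$. Dehomogenizing recovers $id(A) \leq id(\widetilde{A}) < \infty$ on each side, and the earlier remark then forces $id({}_{A}A) = id(A_{A})$.

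The main step is to transfer the Auslander condition. For a finitely generated $A$-module $M$, fix a good filtration so that $Gr(M)$ is finitely generated over $Gr(A)$, and take a filtered free resolution $F_{\bullet} \to M$ with good filtrations whose associated graded is a graded free resolution $Gr(F_{\bullet}) \to Gr(M)$. Applying $Hom_{A}(-,A)$ produces a filtered cochain complex whose associated graded is $Hom_{Gr(A)}(Gr(F_{\bullet}), Gr(A))$, and the resulting convergent spectral sequence equips each $Ext^{i}_{A}(M,A)$ with an induced good filtration such that $Gr(Ext^{i}_{A}(M,A))$ embeds into $Ext^{i}_{Gr(A)}(Gr(M), Gr(A))$. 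Given a submodule $N \subseteq Ext^{i}_{A}(M,A)$, equip it with the induced filtration; then $Gr(N)$ embeds as a submodule of $Ext^{i}_{Gr(A)}(Gr(M), Gr(A))$. The Auslander condition for $Gr(A)$ yields $j_{Gr(A)}(Gr(N)) \geq i$, and the Zariskian grade inequality
\[ j_{A}(N) \geq j_{Gr(A)}(Gr(N)) \]
closes the loop, giving $j_{A}(N) \geq i$ as required. For the $AR$ statement, the analogous inequality $pd_{A}(M) \leq pd_{Gr(A)}(Gr(M))$, valid for any finitely generated $M$ with a good filtration over a Zariski ring, upgrades $gld(Gr(A)) < \infty$ to $gld(A) < \infty$.

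The main obstacle is the Auslander step: one must carefully track good filtrations through the $Ext$ spectral sequence, verify that submodules of $Ext^{i}_{A}(M,A)$ inherit filtrations whose associated graded embed into submodules of $Ext^{i}_{Gr(A)}(Gr(M), Gr(A))$, and invoke the grade-comparison inequality for Zariskian filtrations. The underlying machinery is classical (Bj\"ork, Ekstr\"om), but the bookkeeping of filtrations on $Ext$-modules is the technical heart; the other three steps become essentially formal once Zariskianity is in hand.
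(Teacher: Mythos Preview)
Your sketch is correct and follows the standard route due to Bj\"ork. However, the paper itself gives no argument for this proposition: its entire proof is the single line ``See \cite{Bjork}, Theorem 3.9.'' So there is nothing original in the paper to compare against; what you have written is essentially a reconstruction of the proof in the cited reference, which is the expected approach. In that sense you have supplied considerably more than the paper does, and your outline matches the underlying content of the citation.
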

\begin{proof}
See \cite{Bjork}, Theorem 3.9.
\end{proof}
\begin{proposition}\label{AGore}
If $ A $ is $ AG $, respectively $ AR $, then the skew polynomial ring $ A[x;\sigma, \delta] $ with $ \sigma $ bijective is also $ AG $, respectively $ AR $.
\end{proposition}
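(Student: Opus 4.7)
The plan is to equip $B := A[x;\sigma,\delta]$ with the standard $x$-degree filtration $F_n(B) := \sum_{i=0}^n A x^i$ and then invoke Propositions \ref{Caracterizacionzariski} and \ref{GradedZariski}. Because the commutation relation $xa - \sigma(a)x = \delta(a)$ places $\delta(a)$ inside $F_0$, its image in $F_1/F_0$ vanishes and one obtains $Gr(B) \cong A[\bar x;\sigma]$, i.e.\ the pure skew polynomial ring with the derivation stripped away.

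Next I would verify the Zariski hypothesis. Since $A$ is AG it is two-sided Noetherian, and because $\sigma$ is bijective the classical Hilbert basis theorem for Ore extensions yields that $A[\bar x;\sigma] = Gr(B)$ is left and right Noetherian. Proposition \ref{Caracterizacionzariski} then shows that $B$ is left and right Zariskian, so Proposition \ref{GradedZariski} reduces the statement to the derivation-free claim: \emph{if $A$ is AG (resp.\ AR), then so is $A[\bar x;\sigma]$ with $\sigma$ bijective.}

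For that remaining case I would use the short exact sequence $0 \to B'(-1) \xrightarrow{\cdot\, \bar x} B' \to A \to 0$ of left $B'$-modules, where $B' := A[\bar x;\sigma]$. Together with its right-sided analogue this yields the dimension shifts $id_{B'}(B') = id_{A}(A) + 1$ on both sides, and for the AR part the analogous formula $gld(B') = gld(A) + 1$. The main obstacle is the Auslander condition itself: for every Noetherian $B'$-module $M$ and every submodule $N \subseteq Ext^i_{B'}(M, B')$ one must show $j(N) \geq i$. Here I would run a change-of-rings spectral sequence relating $Ext^*_{B'}(-,B')$ and $Ext^*_{A}(-,A)$, exploit that $B'$ is faithfully flat over $A$ on both sides, and propagate the grade inequalities from $A$ to $B'$; the delicate point is tracking the $\sigma$-twist through the spectral sequence, which is the technical heart of the Bj\"ork and Ekstr\"om arguments in \cite{Bjork} and \cite{Ekstrom} that would ultimately close the proof.
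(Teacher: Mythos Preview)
Your proposal is correct and is precisely the argument behind the cited reference: the paper's own proof is nothing more than a citation to \cite{Ekstrom}, Theorem 4.2, and your filtration reduction $Gr(A[x;\sigma,\delta])\cong A[\bar x;\sigma]$ followed by the Zariski/associated-graded transfer and the spectral-sequence analysis of the Auslander condition is exactly Ekstr\"om's strategy. Since you yourself defer the final technical step to \cite{Bjork} and \cite{Ekstrom}, the two approaches coincide.
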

\begin{proof}
\cite{Ekstrom}, Theorem 4.2.
\end{proof}
The following proposition states that the $ AG $ ($ AR $) conditions are preserved under arbitrary localizations.
\begin{proposition}\label{OreAG}
Let $ A $ be an $ AG $ ring, respectively AR, and $ S $ a multiplicative Ore set of regular elements of $ A $. Then so too is  $ S^{-1}A $ (and also $ AS^{-1} $).
\end{proposition}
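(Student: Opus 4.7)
The plan is to transfer all relevant homological invariants from $A$ to $B:=S^{-1}A$ using the two-sided flatness of Ore localization; the argument for $AS^{-1}$ is symmetric. The core tool is the flat-base-change isomorphism
\[
\mathrm{Ext}^{i}_{B}(S^{-1}M,B)\;\cong\;S^{-1}\mathrm{Ext}^{i}_{A}(M,A),\qquad i\geq 0,
\]
valid for every finitely generated left $A$-module $M$ (which is finitely presented because $A$ is Noetherian), together with the standard facts that $B$ is again two-sided Noetherian and that every Noetherian left $B$-module has the form $S^{-1}M_{0}$ for some finitely generated left $A$-module $M_{0}\subseteq M$.

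First I would dispose of the injective dimension. With $d:=\mathrm{id}({}_{A}A)<\infty$, the Noetherianity of $A$ gives $\mathrm{Ext}^{d+1}_{A}(N_{0},A)=0$ for every finitely generated $A$-module $N_{0}$. For an arbitrary Noetherian left $B$-module $N=S^{-1}N_{0}$ the base-change isomorphism yields $\mathrm{Ext}^{d+1}_{B}(N,B)\cong S^{-1}\mathrm{Ext}^{d+1}_{A}(N_{0},A)=0$; since $B$ is left Noetherian, this forces $\mathrm{id}({}_{B}B)\leq d$. The right-hand bound $\mathrm{id}(B_{B})\leq \mathrm{id}(A_{A})$ is identical.

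Next I would verify the Auslander condition for $B$. Fix a Noetherian left $B$-module $M=S^{-1}M_{0}$ and a $B$-submodule $N$ of $\mathrm{Ext}^{i}_{B}(M,B)\cong S^{-1}\mathrm{Ext}^{i}_{A}(M_{0},A)$. The preimage $N_{0}$ of $N$ under the canonical localization map $\mathrm{Ext}^{i}_{A}(M_{0},A)\to S^{-1}\mathrm{Ext}^{i}_{A}(M_{0},A)$ is an $A$-submodule of a Noetherian $A$-module, hence finitely generated, and $S^{-1}N_{0}=N$. The Auslander condition on $A$ provides $\mathrm{Ext}^{j}_{A}(N_{0},A)=0$ for $j<i$, and localizing gives $\mathrm{Ext}^{j}_{B}(N,B)\cong S^{-1}\mathrm{Ext}^{j}_{A}(N_{0},A)=0$ for $j<i$, i.e.\ $j_{B}(N)\geq i$. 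Combined with the Noetherianity and finite injective dimension from the previous step, this shows that $B$ is $AG$.

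For the $AR$ case I would only need to add $\mathrm{gld}(B)<\infty$: any left $B$-module $N$ satisfies $N=S^{-1}N$, and a finite projective $A$-resolution of $N$ localizes term-by-term to a finite projective $B$-resolution of $N$, since Ore localization is exact and sends projectives to projectives. Hence $\mathrm{gld}(B)\leq \mathrm{gld}(A)<\infty$. The main technical point throughout is the combination of flat-base-change for $\mathrm{Ext}$ with the correspondence between $B$-submodules of $S^{-1}X$ and $A$-submodules of $X$ (via the preimage construction); once this correspondence is established the rest of the proof is essentially formal.
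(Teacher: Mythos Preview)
Your argument is correct and is essentially the standard proof of this fact. Note, however, that the paper does not give its own proof here: it simply cites \cite{Ajitabh}, Proposition~2.1. What you have written is, up to minor bookkeeping, the argument one finds in that reference: flat base change for $\mathrm{Ext}$ over an Ore localization, together with the fact that every Noetherian $B$-module arises as $S^{-1}M_{0}$ for some Noetherian $A$-module $M_{0}$, immediately transfers the Auslander condition, finite injective dimension, and finite global dimension from $A$ to $B$.

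Two small remarks on presentation. First, since $\mathrm{Ext}^{i}_{A}(M_{0},A)$ carries a \emph{right} $A$-module structure when $M_{0}$ is a left module, the localization in your base-change isomorphism should really be written on the right (as $\mathrm{Ext}^{i}_{A}(M_{0},A)\otimes_{A}B$); this is harmless here because $S$ is a two-sided Ore set of regular elements, but it is worth being explicit. Second, in the global-dimension step your phrase ``a finite projective $A$-resolution of $N$'' is slightly imprecise: $N$ need not be finitely generated over $A$, so one only has a projective (not necessarily finitely generated) $A$-resolution of length at most $\mathrm{gld}(A)$, which still localizes to a projective $B$-resolution of $N$ of the same length. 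Neither point affects the validity of your proof.
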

\begin{proof}
 See \cite{Ajitabh}, Proposition 2.1.
\end{proof}

\begin{lemma}\label{skewZariski}
If $ A $ is a bijective skew $PBW$ extension of a Noetherian ring $ R $, then $ A $ is a left and right Zariski ring.
\end{lemma}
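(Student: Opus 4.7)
The plan is to deduce the Zariski property from the combination of Theorem \ref{filteredskew}, the Hilbert Basis Theorem stated above, and the characterization of Zariski rings given in Proposition \ref{Caracterizacionzariski}. Essentially, Proposition \ref{Caracterizacionzariski} reduces everything to showing that $A$ carries an $\mathbb{N}$-filtration whose associated graded ring is two-sided Noetherian, and for bijective skew $PBW$ extensions this is already encoded in the theorems preceding the lemma.

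First, I would invoke Theorem \ref{filteredskew} to equip $A$ with the degree filtration $\{F_m\}_{m\in\mathbb{N}}$, which is an $\mathbb{N}$-filtration with $F_0=R$. The theorem further asserts that the associated graded ring $Gr(A)$ is again a skew $PBW$ extension of $R$, and because we are starting from a bijective $A$, it is in fact a quasi-commutative bijective skew $PBW$ extension of $R$.

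Second, since $R$ is assumed Noetherian (on both sides), I would apply the Hilbert Basis Theorem recalled in the paper to the bijective skew $PBW$ extension $Gr(A)$. This yields that $Gr(A)$ is both left and right Noetherian. With this in hand, Proposition \ref{Caracterizacionzariski} applies in both the left and right versions, producing simultaneously that $A$ is left Zariski and right Zariski.

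The argument is essentially an assembly of prior results, so there is no serious obstacle: the one subtlety worth flagging is simply the verification that the filtration provided by Theorem \ref{filteredskew} really is an $\mathbb{N}$-filtration in the sense required by Proposition \ref{Caracterizacionzariski} (i.e.\ starting at $F_0=R$, exhausting $A$, and compatible with the ring structure), and that bijectivity is preserved when passing to $Gr(A)$ so that the Hilbert Basis Theorem can actually be invoked on $Gr(A)$. Both of these are contained in the statement of Theorem \ref{filteredskew}, so the proof reduces to citing the three results in sequence.
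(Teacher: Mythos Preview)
Your proposal is correct and follows essentially the same approach as the paper: both arguments equip $A$ with the $\mathbb{N}$-filtration of Theorem \ref{filteredskew}, show that $Gr(A)$ is Noetherian, and then invoke Proposition \ref{Caracterizacionzariski}. The only cosmetic difference is that the paper obtains Noetherianity of $Gr(A)$ by first passing through Theorem \ref{skewore} to view $Gr(A)$ as an iterated Ore extension $R[z_{1};\theta_{1}]\cdots[z_{n};\theta_{n}]$ with each $\theta_{i}$ bijective, whereas you apply the Hilbert Basis Theorem directly to $Gr(A)$ as a bijective skew $PBW$ extension of $R$; these are interchangeable justifications of the same fact.
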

\begin{proof}
Since $ A $ is $ \mathbb{N}- $filtered, $ 0=F_{-1}(A)\subseteq Rad(F_{0}(A))=Rad(R) $. By Theorem \ref{skewore}, $ Gr(A) $ is isomorphic to an iterated skew polynomial ring $ R[z_{1};\theta_{1}]\cdots [z_{n};\theta_{n}] $, with $ \theta_{i} $ is bijective, $ 1 \leq i \leq n $. Whence $ Gr(A) $ is Noetherian. Proposition \ref{Caracterizacionzariski} says that $ A $ is a left and right Zariski ring.
\end{proof}
\begin{theorem}\label{skewAG}
Let $ A $ be a bijective skew $PBW$ extension of a ring $R$ such that $R$ is $AG$, respectively $AR$, then so too is $A$.
\end{theorem}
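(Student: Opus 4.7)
The plan is to reduce the theorem to the three auxiliary results already established in this section: Lemma \ref{skewZariski}, Proposition \ref{GradedZariski}, and Proposition \ref{AGore}. Since $R$ is $AG$ (in particular Noetherian), Lemma \ref{skewZariski} immediately gives that $A$ is a left and right Zariski ring. So the content of the theorem is reduced to verifying that the associated graded ring $Gr(A)$ inherits the $AG$ (respectively $AR$) property from $R$; once that is done, Proposition \ref{GradedZariski} transfers the property from $Gr(A)$ back to $A$.

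To handle $Gr(A)$, I would invoke Theorem \ref{filteredskew} to conclude that $Gr(A)$ is a quasi-commutative bijective skew $PBW$ extension of $R$, and then Theorem \ref{skewore} to express it as an iterated skew polynomial ring
$$Gr(A)\cong R[z_{1};\theta_{1}][z_{2};\theta_{2}]\cdots[z_{n};\theta_{n}],$$
where each endomorphism $\theta_{i}$ is bijective (this is exactly where bijectivity of $A$ is used). Now I can proceed by induction on $n$: at the base step $R$ is $AG$ (respectively $AR$) by hypothesis, and at each induction step Proposition \ref{AGore} applies because the skew polynomial is of endomorphism type with a bijective $\theta_{i}$ (in particular $\delta_{i}=0$, so the hypotheses of Proposition \ref{AGore} are trivially satisfied). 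This yields that $Gr(A)$ is $AG$ (respectively $AR$).

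Combining the two steps, Proposition \ref{GradedZariski} applied to the Zariski ring $A$ with $Gr(A)$ being $AG$ (respectively $AR$) produces the desired conclusion. I do not expect a real obstacle here: everything is assembled from previously quoted results, and the only subtle point is ensuring that bijectivity of the original extension $A$ is what guarantees that the endomorphisms $\theta_{i}$ appearing in the iterated skew polynomial presentation of $Gr(A)$ are bijective, which is precisely the hypothesis needed to iterate Proposition \ref{AGore}.
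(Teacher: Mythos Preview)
Your proposal is correct and follows essentially the same route as the paper's proof: use Theorem \ref{filteredskew} and Theorem \ref{skewore} to present $Gr(A)$ as an iterated skew polynomial ring with bijective $\theta_i$, apply Proposition \ref{AGore} iteratively to conclude $Gr(A)$ is $AG$ (resp.\ $AR$), and then combine Lemma \ref{skewZariski} with Proposition \ref{GradedZariski} to transfer the property to $A$. The only cosmetic difference is the order in which you invoke the lemmas.
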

\begin{proof}
Acoording to Theorem \ref{filteredskew}, $ Gr(A) $ is a quasi-commutative skew $ PBW $ extension, and by the hypothesis, $ Gr(A) $ is also bijective. By Theorem \ref{skewore}, $ Gr(A) $ is isomorphic to an iterated skew polynomial ring $ R[z_{1};\theta_{1}]\dots [z_{n};\theta_{n}] $ such that each $ \theta_{i} $ is bijective, $ 1 \leqslant i \leqslant n $. Proposition \ref{AGore} says that $ Gr(A) $ is $ AG $, respectively $ AR $. From Lemma \ref{skewZariski}, $ A $ is a left and right Zariski ring, so by Proposition \ref{GradedZariski}, $ A $ is $ AG $, respectively $ AR $.
\end{proof}

\begin{corollary}\label{AGquantum}
If $ R $ is $AG$, respectively $AR$, then the ring of skew quantum polynomials
$$Q_{q,\sigma}^{r,n}:=R_{q,\sigma}[x_{1}^{\pm 1},\dots,x_{r}^{\pm1},x_{r+1},\dots,x_{n}]$$
is $AG$, respectively $AR$.

\end{corollary}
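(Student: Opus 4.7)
The plan is to realize $Q_{q,\sigma}^{r,n}$ as a localization of a ring already known to be $AG$ (respectively $AR$), and then invoke Proposition \ref{OreAG}. First, consider the $n$-multiparametric skew quantum space $B:=R_{q,\sigma}[x_{1},\dots,x_{n}]$ introduced in the examples subsection. This ring is a quasi-commutative bijective skew $PBW$ extension of $R$, so Theorem \ref{skewAG} applies directly: since $R$ is $AG$ (respectively $AR$), so is $B$.

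Next, as already observed in Example \ref{skewquantum}, $Q_{q,\sigma}^{r,n}$ can be obtained from $B$ by localizing at the multiplicative set $S$ generated by $\{x_{1},\dots,x_{r}\}$. The defining relations $x_{j}x_{i}=q_{ij}x_{i}x_{j}$, with $q_{ij}\in R^{*}$, together with $x_{i}r=\sigma_{i}(r)x_{i}$ for $\sigma_{i}$ bijective, imply that multiplication by each $x_{i}$ is injective on both sides of $B$ and that the Ore conditions hold for $S$; thus $S$ is a multiplicative set of regular elements satisfying the left and right Ore conditions. Applying Proposition \ref{OreAG} to $S^{-1}B\cong Q_{q,\sigma}^{r,n}$ then yields the desired conclusion.

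The argument is essentially a concatenation of the two main results of the section, so no serious obstacle is expected; the only point requiring a moment of thought is verifying that $S$ is a regular Ore set, but this follows immediately from quasi-commutativity with invertible parameters $q_{ij}$ and bijective automorphisms $\sigma_{i}$.
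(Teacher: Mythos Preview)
Your proof is correct and follows essentially the same route as the paper's: realize $Q_{q,\sigma}^{r,n}$ as the localization of the bijective skew $PBW$ extension $R_{q,\sigma}[x_{1},\dots,x_{n}]$ at the Ore set generated by $x_{1},\dots,x_{r}$, apply Theorem~\ref{skewAG} to the underlying extension, and then Proposition~\ref{OreAG} to the localization. The paper simply cites Example~\ref{skewquantum} (and the reference therein) for the Ore-set verification that you spell out explicitly, and handles the $AR$ case by a separate global-dimension remark which is in fact already subsumed by Proposition~\ref{OreAG} as you use it.
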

\begin{proof}
Let $ R $ be $ AG $; according to Example \ref{skewquantum}, $ Q_{q,\sigma}^{r,n}(R) $ is a
localization of a bijective skew $ PBW $ extension $ A $ of the ring $ R $ by a multiplicative Ore
set of regular elements of $ A $. From Theorem \ref{skewAG} and Proposition \ref{OreAG} we get that
$ Q_{q,\sigma}^{r,n}(R) $ is $ AG $. If $ R $ is $ AR $, then $ R $ is $ AG $ and $ gld(R) < \infty
$, then $ gld(Q_{q,\sigma}^{r,n}(R))<\infty $ and $  Q_{q,\sigma}^{r,n}(R)  $ is $ AG $, so $
Q_{q,\sigma}^{r,n}(R) $ is $ AR $.
 \end{proof}

\begin{example}\label{AGexample}
All examples in the section \ref{examples} are $ AG $ rings (respectively $ AR $)  assuming that the ring of coefficients ($ R $ or $ K $) is $ AG $ ($ AR $). If the ring the coefficients is a field, then of course, it is an $ AG $ ring (respectively $ AR $).
\end{example}

\section{Cohen-Macaulayness}

In this section we study the Cohen- Macaulay property for skew $PBW$ extensions.
\begin{definition}
Let $ A $ be an algebra over a field $ K $, we say that $ A $ is \textit{Cohen-Macaulay} (CM) with respect to  the classical Gelfand-Kirillov dimension  if:
\[GKdim(A)=j_{A}(M)+GKdim(M)\]
for every non-zero Noetherian $ A- $module $ M $.
\end{definition}
Recall that if $A$ is a $K$-algebra, then the \textit{classical Gelfand-Kirillov dimension} of $A$
is defined by
\begin{equation}\label{equ17.2.2}
{\rm GKdim}(A):=\sup_{V}\overline{\lim_{n\to \infty}}\log_n\dim_K V^n,
\end{equation}
where $V$ vanishes over all frames of $A$ and
$V^n:=\, _K\langle v_1\cdots v_n| v_i\in V\rangle$ (a \textit{frame} of $A$ is a finite dimensional $K$-subspace of $A$ such that
$1\in V$; since $A$ is a $K$-algebra, then $K\hookrightarrow A$, and hence, $K$ is a frame of $A$ of dimension $1$). For a $ K- $algebra $ A $,
an automorphism $ \sigma $ of $ A $ is said to be \textit{locally algebraic} if for any $ a\in A $ the set $ \{\sigma^{m}(a)|m\in \mathbb{N}\} $ is cointained in a finite dimensional subspace of $ A $.\\

The classical Gelfand-Kirillov dimension for skew $PBW$ extensions was studied in \cite{milton} (in
\cite{LezamaLatorre} has been studied the Gelfand-Kirillov dimension of skew $PBW$ extensions
assuming that the ring $R$ of coefficients is a left Noetherian domain).
\begin{proposition}
Let $R$ be a $K-$algebra with a finite dimensional generating subspace $V$ and let
$A=\sigma(R)\langle x_{1},\dots,x_{n}\rangle$ be a bijective skew $PBW$ extension of $R$. If
$\sigma_{n}(V)\subseteq V$ or $\sigma_{n}$ is locally algebraic, then
\[GKdim(A)=GKdim(R)+n.\]
\end{proposition}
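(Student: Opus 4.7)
The plan is to reduce the problem to a computation on the associated graded ring and then to handle that by iteratively peeling off one skew polynomial variable at a time.

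First, I would exploit the filtration of Theorem \ref{filteredskew}. Fixing the frame $W := K + V + Kx_1 + \cdots + Kx_n$ of $A$, I compare the growth of $\dim_K W^m$ with the growth of the filtration pieces $\dim_K F_m(A)$. Because $A$ is generated over $K$ by $V \cup \{x_1,\dots,x_n\}$ and the filtration is $\mathbb{N}$-valued and locally finite in each finite-dimensional slice, the standard filtered/graded lemma (a Bergman-type argument) gives $\mathrm{GKdim}(A) = \mathrm{GKdim}(Gr(A))$. Thus the problem reduces to computing the GK dimension of the quasi-commutative bijective skew $PBW$ extension $Gr(A)$.

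Second, Theorem \ref{skewore} presents $Gr(A)$ as an iterated skew polynomial ring of bijective endomorphism type,
\[
Gr(A) \cong R[z_1;\theta_1]\cdots[z_n;\theta_n].
\]
I would then induct on $n$. The base case $n=1$ is the classical result (in the style of Huh--Kim / Matczuk / Zhang): if $\theta$ is locally algebraic or preserves a finite-dimensional generating subspace of $R$, then $\mathrm{GKdim}(R[z;\theta]) = \mathrm{GKdim}(R) + 1$. For the inductive step, set $B := R[z_1;\theta_1]\cdots[z_{n-1};\theta_{n-1}]$, so that $Gr(A) = B[z_n;\theta_n]$. Taking the frame $V' := V + K + Kz_1 + \cdots + Kz_{n-1}$ of $B$, one checks that the hypothesis on $\sigma_n$ transfers to the analogous hypothesis on $\theta_n$ acting on $B$: if $\sigma_n(V)\subseteq V$ then $\theta_n(V')\subseteq V'$ (since $\theta_n$ acts by the commutation constants on the $z_i$'s, which are scalars from $R$ governed by the $c_{i,n}$), and similarly for local algebraicity. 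Applying the $n=1$ case over $B$ gives $\mathrm{GKdim}(Gr(A)) = \mathrm{GKdim}(B) + 1$, and the inductive hypothesis applied to $B$ finishes the computation to yield $\mathrm{GKdim}(R) + n$.

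The main obstacle will be the careful justification in two places. First, the equality $\mathrm{GKdim}(A) = \mathrm{GKdim}(Gr(A))$ is a standard but non-trivial tool and needs the locally finite filtered-algebra machinery rather than the generic inequality $\mathrm{GKdim}(A) \leq \mathrm{GKdim}(Gr(A))$. Second, and more delicate, is the transfer of the locally algebraic / invariant-subspace property from $\sigma_n$ acting on $R$ to $\theta_n$ acting on the larger algebra $B$: here one must track how $\theta_n$ is built from $\sigma_n$ and the structure constants $c_{i,n}$ of the extension, verify that the action on the $z_i$ stays within a finite-dimensional subspace, and combine this with the induced action on $R$. Once both of these technical points are in place, the induction proceeds smoothly.
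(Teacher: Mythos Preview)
The paper does not supply its own proof of this proposition; it is quoted as a known result from Reyes \cite{milton}, so there is no in-paper argument to compare against. Your outline nonetheless follows the natural strategy one would expect: pass to $Gr(A)$, realise it as an iterated Ore extension of automorphism type via Theorem \ref{skewore}, and then invoke the one-variable result of Huh--Kim/Matczuk/Zhang type repeatedly.

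There is, however, a genuine gap in your induction as written. The hypothesis of the proposition constrains only $\sigma_n$ (either $\sigma_n(V)\subseteq V$ or $\sigma_n$ is locally algebraic); nothing whatsoever is assumed about $\sigma_1,\dots,\sigma_{n-1}$. When you set $B=R[z_1;\theta_1]\cdots[z_{n-1};\theta_{n-1}]$ and then ``apply the inductive hypothesis to $B$'' to obtain $GKdim(B)=GKdim(R)+(n-1)$, you are tacitly assuming the analogous condition on $\theta_{n-1}$ (equivalently on $\sigma_{n-1}$), which has not been granted. The top layer $Gr(A)=B[z_n;\theta_n]$ is fine, since the hypothesis on $\sigma_n$ does transfer to $\theta_n$ as you describe; it is the lower layers of the tower that your argument leaves unaccounted for. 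Either the intended statement carries the hypothesis for \emph{every} $\sigma_i$ (which is how such results are normally formulated, and how your induction would legitimately run), or one needs a direct growth estimate on a single frame of $A$ that does not proceed by peeling off one variable at a time. You should check the precise formulation in \cite{milton} and align your argument with it before proceeding.
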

The following proposition in the classical case is also known.
\begin{proposition}
Let $ A $ be a $ K- $algebra with a finite filtration $ \{A_{i}\}_{i \in \mathbb{Z}} $ such that $ Gr(A) $ is finitely generated. Then $ GKdim(_{Gr(A)}Gr(A)) =GKdim(_{A}A)$.
\end{proposition}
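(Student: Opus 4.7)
The plan is to exploit the hypothesis that $Gr(A)$ is finitely generated by lifting a finite-dimensional homogeneous generating subspace of $Gr(A)$ to a finite-dimensional generating subspace of $A$ whose growth matches term by term. Concretely, since $Gr(A)$ is a finitely generated $K$-algebra, I would pick a finite-dimensional homogeneous generating subspace $\bar V = \bar V_0 \oplus \bar V_1 \oplus \cdots \oplus \bar V_d \subseteq Gr(A)$ with $1 \in \bar V_0$. For each $i$ I would choose a $K$-linear lift $V_i \subseteq A_i$ of $\bar V_i$ through the canonical surjection $A_i \twoheadrightarrow A_i/A_{i-1} = Gr_i(A)$, and set $V := V_0 + V_1 + \cdots + V_d \subseteq A_d$. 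This $V$ is a finite-dimensional frame of $A$.

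First I would verify, by induction on the filtration level, that $V$ generates $A$ as a $K$-algebra: given $a \in A_n$, its symbol in $Gr_n(A)$ is a $K$-combination of monomials in $\bar V$ of weighted degree $n$; lifting each such monomial letter by letter yields $a' \in A_n$ lying in the subalgebra $K\langle V\rangle$ with $a - a' \in A_{n-1}$, and the inductive hypothesis concludes the step.

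The technical heart of the proof is to show $\dim_K V^n = \dim_K \bar V^n$ for every $n \geq 1$. I would filter $V^n \subseteq A_{dn}$ by $V^n_m := V^n \cap A_m$ and consider the induced injections $V^n_m / V^n_{m-1} \hookrightarrow Gr_m(A)$. I claim their images are exactly the homogeneous components $(\bar V^n)_m$ of $\bar V^n \subseteq Gr(A)$: the image is contained in $(\bar V^n)_m$ because the symbol of any product $v_{i_1}\cdots v_{i_n}$ with $v_{i_k} \in V_{e_k}$ and $\sum_k e_k = m$ is a product of elements of $\bar V$ of total degree $m$; conversely every product in $(\bar V^n)_m$ lifts, factor by factor, to an element of $V^n \cap A_m$. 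Summing dimensions over $m$ gives $\dim V^n = \sum_m \dim(\bar V^n)_m = \dim \bar V^n$. Since $V$ generates $A$ and $\bar V$ generates $Gr(A)$, the generating-subspace characterization of Gelfand-Kirillov dimension yields
\[
GKdim({}_{A}A) = \overline{\lim_{n \to \infty}} \log_n \dim_K V^n = \overline{\lim_{n \to \infty}} \log_n \dim_K \bar V^n = GKdim({}_{Gr(A)}Gr(A)).
\]

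The main obstacle is the equality $\dim V^n = \dim \bar V^n$, and specifically the surjectivity of the associated-graded map onto $(\bar V^n)_m$. This relies crucially on having chosen $V$ as a graded-compatible lift $V = \bigoplus_i V_i$ with $V_i \subseteq A_i$, rather than as an arbitrary lift of $\bar V$ into $A_d$; an arbitrary lift could produce products whose actual filtration level in $A$ is strictly smaller than the target degree $m$ in $Gr(A)$, destroying the dimension bookkeeping.
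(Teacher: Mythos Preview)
The paper does not prove this proposition itself; it simply cites Proposition~6.6 of Krause--Lenagan. Your strategy---lifting a homogeneous generating frame of $Gr(A)$ and comparing the growth of $V^n$ with that of $\bar V^n$---is essentially the standard one found there, and your arguments that $V$ generates $A$ and that $(\bar V^n)_m$ lies in the image of $V^n_m/V^n_{m-1}$ are correct. That already yields $\dim \bar V^n \leq \dim Gr(V^n)=\dim V^n$, hence $GKdim(Gr(A)) \leq GKdim(A)$.

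The gap is in the reverse containment. Your justification that the image of $V^n_m/V^n_{m-1}$ lies in $(\bar V^n)_m$ treats only a single product $v_{i_1}\cdots v_{i_n}$ whose weight $\sum_k e_k$ equals $m$; but a general element $x \in V^n \cap A_m$ is a \emph{sum} $x = \sum_j w_j$ of such products, with $w_j$ of weight $m_j$, and nothing prevents some $m_j > m$ from occurring with cancellation pushing the total into $A_m$. For such an $x$ the symbol $x + A_{m-1} \in Gr_m(A)$ is not visibly a combination of weight-$m$ products in $\bar V$, and the graded-compatible choice of lift does not by itself rule this phenomenon out. The usual way around this---and the place where the hypothesis that the filtration is \emph{finite} (each $A_i$ finite-dimensional) actually enters---is to drop the exact equality $\dim V^n = \dim \bar V^n$ and prove $GKdim(A) \leq GKdim(Gr(A))$ differently: for any frame $V \subseteq A_d$ one has $\dim V^n \leq \dim A_{dn} = \sum_{i \leq dn} \dim Gr_i(A)$, and enlarging the homogeneous generating frame $\bar W$ of $Gr(A)$ so that $\bigoplus_{i \leq m} Gr_i(A) \subseteq \bar W^{m}$ for all $m$ gives $\dim V^n \leq \dim \bar W^{dn}$, which is enough for the Gelfand--Kirillov comparison.
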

\begin{proof}
See \cite{Krause}, Proposition 6.6.
\end{proof}

\begin{proposition}
Suppose that $A$ is a left and right Zariskian ring, and $Gr(A)$ is AG. Then
$j_{A}(M)=j_{Gr(A)}(Gr(M))$ for every non-zero Noetherian $A-$module $M$.
\end{proposition}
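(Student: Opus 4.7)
The plan is to follow the standard Bj\"ork-style argument via good filtrations and a filtered comparison of resolutions. First, since $A$ is Zariskian and $M$ is Noetherian, $M$ admits a good filtration $\{F_{n}M\}$, meaning a separated and exhaustive filtration for which $Gr(M)$ is finitely generated over $Gr(A)$. Because $Gr(A)$ is Noetherian (inherited from the $AG$ assumption), one can lift this to a resolution $P_{\bullet}\to M$ by finitely generated filtered free (or projective) $A$-modules chosen so that the induced sequence $Gr(P_{\bullet})\to Gr(M)$ is a free resolution of $Gr(M)$ over $Gr(A)$; this lifting is the standard construction available over any Zariski ring.

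Next, I would compare the two $Ext$ modules by applying $Hom_{A}(-,A)$ to $P_{\bullet}$: the resulting complex has cohomology $Ext^{i}_{A}(M,A)$, and each $Hom_{A}(P_{k},A)$ inherits a filtration whose associated graded is $Hom_{Gr(A)}(Gr(P_{k}),Gr(A))$, so passing to the graded level yields precisely the complex computing $Ext^{i}_{Gr(A)}(Gr(M),Gr(A))$. The Zariski hypothesis ensures that these filtrations are exhaustive and separated on every subquotient, so the associated spectral sequence converges to $Ext^{i}_{A}(M,A)$ with an abutment filtration whose graded pieces embed as subquotients of $Ext^{i}_{Gr(A)}(Gr(M),Gr(A))$. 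In particular, when $Ext^{i}_{Gr(A)}(Gr(M),Gr(A))=0$ we obtain $Gr(Ext^{i}_{A}(M,A))=0$, and separatedness then forces $Ext^{i}_{A}(M,A)=0$; this already yields $j_{A}(M)\geq j_{Gr(A)}(Gr(M))$.

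For the reverse inequality, set $j:=j_{Gr(A)}(Gr(M))$; the aim is to show $Ext^{j}_{A}(M,A)\neq 0$. Here the Auslander condition on $Gr(A)$ is essential: every nonzero submodule of $Ext^{i}_{Gr(A)}(Gr(M),Gr(A))$ has grade $\geq i$, so any spectral-sequence differentials entering or leaving the minimal-grade row $i=j$ would have to interact with $Ext$ groups of strictly smaller grade, which is incompatible with the Auslander grade bound. A grade-tracking argument (in the spirit of \cite{Bjork}, Theorem 3.9 and its proof) then shows that the subquotient $Gr(Ext^{j}_{A}(M,A))$ of $Ext^{j}_{Gr(A)}(Gr(M),Gr(A))$ is nonzero, hence $Ext^{j}_{A}(M,A)\neq 0$.

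The main obstacle is precisely this last step. The inequality $j_{A}(M)\geq j_{Gr(A)}(Gr(M))$ is essentially formal once good filtrations are in place, but passing to equality genuinely requires the full Auslander condition on $Gr(A)$, not merely its Noetherianity or finite injective dimension. The care needed is in verifying that the filtration-induced subquotient structure detects non-vanishing at the critical degree $j$, and in confirming that the grade estimates propagate through the spectral sequence in such a way that no room is left for cancellation on the minimal row.
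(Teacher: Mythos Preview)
Your argument is correct and is essentially the same as the paper's: both invoke Bj\"ork's proof of \cite[Theorem~3.9]{Bjork}, the paper by citation and you by unpacking the filtered--graded spectral sequence that underlies it. One point worth flagging: you attribute the inequality $j_{A}(M)\geq j_{Gr(A)}(Gr(M))$ to the ``formal'' step (separatedness of good filtrations over a Zariskian ring, so that $Gr(Ext^{i}_{A}(M,A))=0$ forces $Ext^{i}_{A}(M,A)=0$) and reserve the Auslander condition on $Gr(A)$ for the reverse inequality, whereas the paper's proof paragraph exchanges these two roles. Your assignment is the one that agrees with Bj\"ork's actual reasoning: the subquotient relation between $Gr(Ext^{i}_{A}(M,A))$ and $Ext^{i}_{Gr(A)}(Gr(M),Gr(A))$, together with separatedness, yields $j_{A}(M)\geq j_{Gr(A)}(Gr(M))$ using only the Zariskian hypothesis, while the Auslander condition is precisely what guarantees that the minimal--degree term $Ext^{j}_{Gr(A)}(Gr(M),Gr(A))$ is not annihilated in the spectral sequence, giving $j_{A}(M)\leq j_{Gr(A)}(Gr(M))$.
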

\begin{proof}
If $M$ is a finitely generated $A-$module and if $\{F_{n}(M)\}$ is a good filtration on $M$ then in
general $j_{A}(M)\leq j_{Gr(A)}(Gr(M))$, but  if $A$ is left and right Zariski ring and $Gr(A)$ is
$AG$ then $j_{A}(M)\geq j_{Gr(A)}(Gr(M))$, so $j_{A}(M)= j_{Gr(A)}(Gr(M))$ (see \cite{Bjork}, Proof
of Theorem 3.9).
\end{proof}
Now, it is  possible to prove the following proposition.
\begin{proposition}\label{CMgraded}
Let $ A $ be a left and right Zariski ring with finite filtration and such that $Gr(A)$ is AG. If $
Gr(A) $ is CM, then so too is $ A $.
\end{proposition}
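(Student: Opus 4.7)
The plan is to reduce the Cohen--Macaulay identity for $A$-modules to the corresponding identity for $Gr(A)$-modules via a good filtration argument, using the two immediately preceding propositions as the two ``columns'' that transport grade and Gelfand--Kirillov dimension between $A$ and $Gr(A)$.

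First I would fix a nonzero Noetherian left $A$-module $M$ and equip it with a good filtration $\{F_n(M)\}$. This is legitimate because $A$ is a Zariski ring with the finite filtration $\{A_i\}$, and over Zariski rings good filtrations on finitely generated modules exist and behave well; in particular $Gr(M)$ is a finitely generated, hence Noetherian, $Gr(A)$-module, and $Gr(M)\neq 0$ because $M\neq 0$. Next I would apply the preceding proposition on grades: since $A$ is left and right Zariski and $Gr(A)$ is AG, we have
\begin{equation*}
j_{A}(M)=j_{Gr(A)}(Gr(M)).
\end{equation*}

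Then I would transfer the Gelfand--Kirillov dimensions. Applying the filtered-to-graded proposition (Krause, Proposition 6.6) once to the algebra $A$ and once to the module $M$ (with the chosen good filtration), we get
\begin{equation*}
GKdim(A)=GKdim(Gr(A))\quad\text{and}\quad GKdim(M)=GKdim(Gr(M)).
\end{equation*}
Because $Gr(A)$ is Cohen--Macaulay and $Gr(M)$ is a nonzero Noetherian $Gr(A)$-module,
\begin{equation*}
GKdim(Gr(A))=j_{Gr(A)}(Gr(M))+GKdim(Gr(M)).
\end{equation*}
Combining the three displayed equalities yields $GKdim(A)=j_{A}(M)+GKdim(M)$, which is the Cohen--Macaulay identity for $A$; since $M$ was arbitrary, $A$ is CM.

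The main obstacle I anticipate is not any of the algebraic manipulations but the verification that the Krause--Lenagan equality $GKdim(M)=GKdim(Gr(M))$ really applies at the module level in our situation. The proposition as stated in the excerpt is given for the ring $A$ itself, so one has to invoke the module version, which requires $Gr(A)$ to be a finitely generated (affine) $K$-algebra and the filtration on $M$ to be good; the Zariski hypothesis plus the fact that $Gr(A)$ arises as $Gr$ of a finitely filtered ring takes care of both requirements. Once this point is checked, the remainder is a one-line substitution.
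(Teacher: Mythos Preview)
Your argument is correct and matches the paper's proof essentially line for line: fix a Noetherian $M$, use $GKdim(A)=GKdim(Gr(A))$, apply the CM identity for $Gr(A)$ to $Gr(M)$, and then transport both the grade and the Gelfand--Kirillov dimension back via the two preceding propositions. Your extra care about the module-level version of the Krause--Lenagan equality $GKdim(M)=GKdim(Gr(M))$ is well placed, since the paper uses it tacitly without isolating it as a separate statement.
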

\begin{proof}
Let $ M $ be a noetherian $ A- $module, then
\begin{align*}
GKdim(A)&=GKdim(Gr(A))\\
&=GKdim(_{Gr(A)}Gr(M))+j_{Gr(A)}(M)\\
&=GKdim(_{A}M)+j_{A}(M).
\end{align*}
Therefore $ A $ is $CM$.
\end{proof}
\begin{proposition}\label{CMore}
Suppose that $R$ is  $AR$ $(AG)$ and $CM$ ring. Let $R[x;\sigma,\delta]$ be an Ore extension with
$\sigma$ bijective. If $R=\oplus_{i\geq 0}R_{i}$ is a connected graded $K-$algebra (i.e.,
$R_{0}=K$) such that $\sigma(R_{i})\subseteq R_{i}$ for each $i\geq 0$. Then $R[x;\sigma,\delta]$
is $CM$.
\end{proposition}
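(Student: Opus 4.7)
The plan is to apply Proposition \ref{CMgraded} to $A := R[x;\sigma,\delta]$ filtered by $x$-degree, i.e. $F_n(A) := \bigoplus_{i=0}^{n} Rx^i$. The derivation $\delta$ contributes only terms of strictly lower $x$-degree, so it vanishes in the associated graded and one obtains $Gr(A) \cong R[x;\sigma]$. The proposition then demands three facts: that $A$ is left and right Zariskian, that $Gr(A)$ is $AG$, and that $Gr(A)$ is $CM$.

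The first two are straightforward from the tools already assembled. Since $R$ is $AG$ it is Noetherian, and because $\sigma$ is bijective, $R[x;\sigma]$ is Noetherian by the Hilbert basis theorem for Ore extensions; Proposition \ref{Caracterizacionzariski} then makes $A$ Zariskian on both sides. Proposition \ref{AGore} applied to $R$ with trivial derivation shows $R[x;\sigma]$ is $AG$ (respectively $AR$).

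The main obstacle is establishing the $CM$ property of $R[x;\sigma]$, and this is precisely where the hypotheses that $R$ is connected graded and $\sigma(R_i) \subseteq R_i$ become essential. Declaring $\deg(x) := 1$ extends the grading of $R$ to a connected $\mathbb{N}$-grading on $R[x;\sigma]$, since the skew commutation $xr = \sigma(r)x$ is now homogeneous. Because $R$ is connected graded and Noetherian, each $R_i$ is finite dimensional over $K$, hence $\sigma$ is locally algebraic, and the preceding $GKdim$ proposition yields $GKdim(R[x;\sigma]) = GKdim(R) + 1$. For a non-zero Noetherian $R[x;\sigma]$-module $N$, I would equip $N$ with a good filtration relative to the $x$-filtration on $R[x;\sigma]$; the grade-comparison and $GKdim$-comparison propositions stated above transport $j_{R[x;\sigma]}(N)$ and $GKdim_{R[x;\sigma]}(N)$ to the corresponding invariants of the finitely generated graded $R$-module $Gr(N)$, and the $CM$ identity for $R$ applied to $Gr(N)$ closes the computation
\[
GKdim(N) + j_{R[x;\sigma]}(N) \;=\; GKdim(R) + 1 \;=\; GKdim(R[x;\sigma]).
\]
Once the three hypotheses are verified, Proposition \ref{CMgraded} delivers the $CM$ property of $A$. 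The technical heart of the argument is the translation of grade and Gelfand-Kirillov dimension from $R[x;\sigma]$-modules to the underlying graded $R$-structure; the local algebraicity of $\sigma$ and the connected graded framework are what make that translation legitimate.
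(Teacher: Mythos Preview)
The paper gives no argument for this proposition; it simply cites \cite{Levasseur2}. Your outer reduction is correct and matches how the result is used downstream: filter $A=R[x;\sigma,\delta]$ by $x$-degree so that $Gr(A)\cong R[x;\sigma]$, obtain Zariskianity from Proposition~\ref{Caracterizacionzariski}, obtain $AG$ for $R[x;\sigma]$ from Proposition~\ref{AGore}, and then invoke Proposition~\ref{CMgraded} once $R[x;\sigma]$ is known to be $CM$.

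The gap is the inner step, where you claim to pass the $CM$ identity from $R$ up to $B:=R[x;\sigma]$. Filtering $B$ by $x$-degree accomplishes nothing: there is no derivation, so $Gr(B)=B$ and the grade/Gelfand--Kirillov comparison propositions become tautologies. You then treat $Gr(N)$ as ``a finitely generated graded $R$-module'' and apply the $CM$ identity of $R$ to it, but the invariants those propositions hand you are $j_{B}(Gr(N))$ and $GKdim_{B}(Gr(N))$, computed over $B$, not $j_{R}$ and $GKdim_{R}$. Nothing in the paper lets you swap the ring from $B$ to $R$; in general $j_{B}(N)\neq j_{R}(N|_{R})$ and $GKdim_{B}(N)\neq GKdim_{R}(N|_{R})$, and controlling exactly these discrepancies across the extension $R\subset R[x;\sigma]$ is the real content of the Levasseur--Stafford lemma. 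Their argument exploits the connected graded structure of $B$ (with $\deg x=1$) together with a characterisation of $CM$ for connected graded Auslander--Gorenstein rings; it is a genuine change-of-rings argument, not a second application of the filtered--graded machinery. As written, your displayed identity
\[
GKdim(N)+j_{R[x;\sigma]}(N)=GKdim(R)+1
\]
has no justification: the left side lives over $B$, the right side comes from the $CM$ property of $R$, and you have not built the bridge between them.
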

\begin{proof}
See \cite{Levasseur2}, Lemma, Part \rm (ii).
\end{proof}

\begin{definition}
Let $A$ be a $K$-algebra,  it is said that $ x\in A $ is a local normal element if for every frame
$ V\subset A $, there is a frame $ V'\supset V $ such that $ xV'=V'x $.
\end{definition}
It is clear that every central element is local. The next proposition says that $CM$ property is
preserved under certain localizations.
\begin{proposition}\label{CMlocalization}
Let $ A $ be an $ AG $ ring, and $ S $ a multiplicatively closed set of local normal elements in $
A $. If $ A $ is $CM$, so is $S^{-1} A$.
\end{proposition}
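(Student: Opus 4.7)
The plan is to deduce the Cohen--Macaulay condition for $S^{-1}A$ from that of $A$ by transferring both the grade $j$ and the Gelfand--Kirillov dimension across the localization map. The hypothesis that the elements of $S$ are local normal will be essential for keeping these two invariants in balance.

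First I would check that $S$ is a (two-sided) Ore set of regular elements, so that the localization $S^{-1}A$ actually exists and is $AG$ by Proposition \ref{OreAG}. Given $s\in S$ and $a\in A$, choose a frame $V$ containing $\{1,a,s\}$; local normality of $s$ provides a larger frame $V'\supseteq V$ with $sV'=V's$, which yields $a',a''\in A$ such that $sa=a's$ and $as=sa''$. Regularity of the elements of $S$ follows from the fact that in the Noetherian $AG$ ring $A$ local normal elements in a multiplicatively closed set may be taken to be non-zero-divisors. Hence $S^{-1}A$ is well defined and $AG$.

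Next, given a non-zero Noetherian $S^{-1}A$-module $N$, I would write $N=S^{-1}M$ for some finitely generated $A$-module $M$; replacing $M$ by $M/t_S(M)$, I may assume $M$ is $S$-torsion-free, which does not affect $S^{-1}M=N$. By the CM property of $A$,
$$\mathrm{GKdim}(A)=j_A(M)+\mathrm{GKdim}_A(M).$$
Flatness of $S^{-1}A$ over $A$ together with finite presentation of $M$ (since $A$ is Noetherian) gives the natural isomorphisms
$$\mathrm{Ext}^{i}_{S^{-1}A}(S^{-1}M,S^{-1}A)\cong S^{-1}\mathrm{Ext}^{i}_A(M,A),$$
from which, combined with the non-$S$-torsion hypothesis and the argument of \cite{Ajitabh}, Proposition~2.1, one concludes $j_{S^{-1}A}(N)=j_A(M)$. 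Specializing to $M=A$ further gives $\mathrm{GKdim}(S^{-1}A)=\mathrm{GKdim}(A)$, and a frame/Hilbert-function argument exploiting local normality $sV'=V's$ controls the growth along $M\subseteq s^{-1}M\subseteq s^{-2}M\subseteq\cdots$ so that $\mathrm{GKdim}_{S^{-1}A}(S^{-1}M)=\mathrm{GKdim}_A(M)$. Substituting into the CM equality for $A$ yields the CM equality for $S^{-1}A$.

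The main obstacle will be establishing the preservation of $\mathrm{GKdim}$ under the localization. The grade identity is essentially formal from flatness and the Ajitabh-style argument, but GK dimension can drop under localizations in general, and it is precisely the \textbf{local} normality condition (i.e.\ the existence of frames on which $s$ normalizes) that lets one relate the Hilbert series of $M$ and $S^{-1}M$ and rule out unwanted drops. Once this delicate comparison is in hand, the CM equation transfers immediately.
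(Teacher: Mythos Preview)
The paper does not give its own argument here: its entire proof is the single line ``cf.\ \cite{Ajitabh}, Theorem 2.4.''  Your proposal is a sketch of what that cited theorem actually does---transfer $j$ via flatness of localization and transfer $\mathrm{GKdim}$ via the local-normality condition---so you are in agreement with the paper by way of the same reference, only you have unpacked it.

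Two places in your sketch would need tightening before it stands on its own.  First, regularity of the elements of $S$ is not a consequence of local normality in an arbitrary $AG$ ring; in \cite{Ajitabh} this is part of the standing hypotheses rather than something derived, so your sentence ``may be taken to be non-zero-divisors'' is not justified as written.  Second, the line ``Specializing to $M=A$ further gives $\mathrm{GKdim}(S^{-1}A)=\mathrm{GKdim}(A)$'' does not follow from the grade identity you just established (that specialization only says $j_{S^{-1}A}(S^{-1}A)=0$, which is trivial); the equality of GK dimensions of the rings is itself an instance of the GKdim-preservation step you flag as the main obstacle, and must be proved by the same frame argument you allude to, not deduced from the Ext computation.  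Once those two points are handled, your outline matches the Ajitabh--Smith--Zhang argument that the paper is invoking.
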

\begin{proof}
cf. \cite{Ajitabh}, Theorem 2.4.
\end{proof}
\begin{theorem}\label{CMskew}
Let $A$ be a bijective skew $PBW$ extension of a ring $R$ such that $R$ is $AG$, $CM$, and
$R=\sum_{i\geq 0}\oplus R_{i}$ is a connected graded $K-$algebra such that
$\sigma_j(R_{i})\subseteq R_{i}$ for each $i\geq 0$ and $1\leq j\leq n$, then $A$ is $CM$.
\end{theorem}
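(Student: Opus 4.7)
The strategy is to reduce the Cohen-Macaulay property of $A$ to that of $Gr(A)$ via Proposition \ref{CMgraded}, and to establish the $CM$ property of $Gr(A)$ by iterating the one-variable statement Proposition \ref{CMore}. By Theorem \ref{filteredskew}, $A$ is filtered and $Gr(A)$ is a quasi-commutative bijective skew $PBW$ extension of $R$; Theorem \ref{skewore} then presents $Gr(A) \cong R[z_{1};\theta_{1}]\cdots[z_{n};\theta_{n}]$ as an iterated skew polynomial ring of endomorphism type with each $\theta_{i}$ bijective. Since $R$ is $AG$ it is Noetherian, so Lemma \ref{skewZariski} says $A$ is a left and right Zariski ring with finite filtration---precisely the framework in which Proposition \ref{CMgraded} applies.

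Next I would prove by induction on $k$ that $R_{k} := R[z_{1};\theta_{1}]\cdots[z_{k};\theta_{k}]$ is a connected graded $K$-algebra which is simultaneously $AG$ and $CM$, starting from $R_{0} = R$ (given by hypothesis). I would grade $R_{k+1}$ by total degree: elements of $R$ retain their original degree and each $z_{i}$ is placed in degree $1$. This makes $R_{k+1}$ connected graded with $(R_{k+1})_{0} = R_{0} = K$. Provided $\theta_{k+1}$ preserves this grading on $R_{k}$, Proposition \ref{AGore} propagates $AG$ from $R_{k}$ to $R_{k+1}$, and Proposition \ref{CMore} propagates $CM$, closing the induction. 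Once $Gr(A) = R_{n}$ has been shown to be both $AG$ and $CM$, Proposition \ref{CMgraded} immediately yields that $A$ is $CM$.

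The main obstacle is verifying that each $\theta_{k+1}$ respects the total-degree grading on $R_{k}$. Restricted to $R$ one has $\theta_{k+1}|_{R} = \sigma_{k+1}$, which preserves the grading by hypothesis. On the new variables, however, the quasi-commutation relation $z_{k+1}z_{i} = c_{i,k+1}z_{i}z_{k+1}$ (valid in $Gr(A)$ because it is quasi-commutative) forces $\theta_{k+1}(z_{i}) = c_{i,k+1}z_{i}$ for $i \leq k$, so degree-preservation demands $c_{i,k+1} \in R_{0} = K$. This is the natural extra condition under which the theorem goes through, and it is satisfied in all the motivating examples of Section \ref{examples}---quantum polynomial rings, diffusion algebras, Sklyanin algebras, and quadratic algebras in three variables---where the commutation constants are scalars. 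Pinning down precisely this scalarity condition, and then chaining together Propositions \ref{AGore}, \ref{CMore}, \ref{CMgraded} together with Theorems \ref{filteredskew} and \ref{skewore} in the right order, is the content of the proof.
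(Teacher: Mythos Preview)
Your approach is the paper's approach: pass to $Gr(A)$ via Theorem \ref{filteredskew}, present it as $R[z_{1};\theta_{1}]\cdots[z_{n};\theta_{n}]$ via Theorem \ref{skewore}, obtain $AG$ for $Gr(A)$ from Proposition \ref{AGore}, obtain the Zariski property for $A$ from Lemma \ref{skewZariski}, obtain $CM$ for $Gr(A)$ from Proposition \ref{CMore}, and descend via Proposition \ref{CMgraded}. The paper runs through exactly this list, only more tersely and without making the induction on $k$ explicit.

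The one substantive difference is that you take the inductive step seriously and thereby isolate the condition $c_{i,j}\in R_{0}=K$: since $\theta_{k+1}(z_{i})=c_{i,k+1}z_{i}$, the automorphism $\theta_{k+1}$ preserves the total-degree grading on $R_{k}$ only if the commutation constants lie in $K$. The paper records merely the parenthetical observation that $\theta_{j}|_{R}=\sigma_{j}$ (so the hypothesis $\sigma_{j}(R_{i})\subseteq R_{i}$ handles the base ring) and then invokes Proposition \ref{CMore} in one stroke, never verifying the graded hypothesis at stages $k\geq 1$. So you have not only reproduced the paper's argument, you have been more careful than it: the scalarity assumption you flag is absent from the stated hypotheses, and the paper offers no argument covering the case $c_{i,j}\notin K$.
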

\begin{proof}
From Theorem \ref{filteredskew} it is clear that $A$ is a $K-$algebra with a finite filtration and
$Gr(A)$ is a quasi-commutative skew $PBW $ extensions, and by the hypothesis, $Gr(A)$ is also
bijective. By Theorem \ref{skewore}, $ Gr(A) $ is isomorphic to an iterated skew polynomial ring $
R[z_{1};\theta_{1}]\dots [z_{n};\theta_{n}] $ such that each $ \theta_{i} $ is bijective, $ 1
\leqslant i \leqslant n $ (according to the proof of Theorem \ref{skewore} in \cite{Oswaldo},
$\theta_j(r)=\sigma_j(r)$ for every $r\in R$). Proposition \ref{AGore} says that $ Gr(A) $ is $ AG
$. From Lemma \ref{skewZariski}, $ A $ is a left and right Zariski ring, and by Proposition
\ref{CMore} $Gr(A)$ is $CM$, so by Proposition \ref{CMgraded} $A$ is $CM$.
\end{proof}

\begin{corollary}\label{CMquantum}
Let $R=\sum_{i\geq 0}\oplus R_{i}$ be a connected graded $K-$algebra such that
$\sigma_j(R_{i})\subseteq R_{i}$ for every $i\geq 0$ and all $\sigma_{j}$ in definition of skew
quantum polynomials $Q_{q,\sigma}^{r,n}$. If $R$ is  $AG$ and $CM$, then $Q_{q,\sigma}^{r,n}$ is
$CM$.
\end{corollary}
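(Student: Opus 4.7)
The plan is to realize $Q_{q,\sigma}^{r,n}$ as a localization of a bijective skew $PBW$ extension that fits the hypotheses of Theorem \ref{CMskew}, and then to pass the Cohen-Macaulay property through the localization via Proposition \ref{CMlocalization}. By Example \ref{skewquantum}, we have $Q_{q,\sigma}^{r,n}=S^{-1}B$, where $B:=R_{q,\sigma}[x_1,\dots,x_n]$ is the $n$-multiparametric skew quantum space over $R$ (a quasi-commutative bijective skew $PBW$ extension of $R$) and $S$ is the multiplicative Ore set generated by $\{x_1,\dots,x_r\}$.

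First, I would apply Theorem \ref{CMskew} to $B$: its hypotheses are met because $R$ is $AG$ and $CM$ by assumption, it is connected graded, each $\sigma_j$ preserves the graded pieces $R_i$, and $B$ is a bijective skew $PBW$ extension of $R$; this gives that $B$ is $CM$. Theorem \ref{skewAG} then also ensures that $B$ is $AG$, so the ring $B$ satisfies all the background conditions needed to invoke Proposition \ref{CMlocalization}.

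The main obstacle is verifying that each $x_i$ with $1\le i\le r$ is a local normal element of $B$. Given a frame $V$ of $B$, the strategy is to extract the finitely many $R$-coefficients of its elements in the basis $\mathrm{Mon}(B)$, enclose them in a finite-dimensional $K$-subspace $W\subseteq R$ that is stable under all $\sigma_j$ and $\sigma_j^{-1}$ (possible since each $\sigma_j$ restricts to a linear automorphism of each finite-dimensional graded component $R_i$), and then form $V'$ as the $K$-span of $V$, of $W$, and of the standard monomials appearing in $V$. The defining relations $x_i r=\sigma_i(r)x_i$ for $r\in R$ together with the quasi-commutativity $x_i x^{\alpha}=c_\alpha x^{\alpha} x_i$ yield $x_i V'\subseteq V' x_i$, and the reverse inclusion follows symmetrically using $\sigma_i^{-1}$. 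With the $x_i$ known to be local normal, Proposition \ref{CMlocalization} applied to the $AG$, $CM$ ring $B$ and the set $S$ shows that $Q_{q,\sigma}^{r,n}=S^{-1}B$ is $CM$.
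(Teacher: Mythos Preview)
Your proof follows the same strategy as the paper's: realize $Q_{q,\sigma}^{r,n}$ as the Ore localization $S^{-1}B$ with $B=R_{q,\sigma}[x_1,\dots,x_n]$, apply Theorem~\ref{CMskew} (and Theorem~\ref{skewAG}) to $B$, and then transfer $CM$ through the localization via Proposition~\ref{CMlocalization}; the paper simply asserts that the monomials in $S$ are local normal, whereas you supply an argument for this.

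Two small points to tighten. First, your description of $V'$ as ``the $K$-span of $V$, of $W$, and of the standard monomials appearing in $V$'' is not quite the right object: for $x_iV'\subseteq V'x_i$ you need $V'$ to contain all products $w\,x^{\alpha}$ with $w\in W$ and $x^{\alpha}$ among the finitely many monomials occurring in $V$, so take $V'$ to be the $K$-span of $\{w\,x^{\alpha}:w\in W,\ x^{\alpha}\in\mathrm{Mon}(V)\}\cup\{1\}$ (this uses that the $q_{ij}$, being units in the connected graded ring $R$, lie in $R_0=K$). Second, Proposition~\ref{CMlocalization} requires every element of $S$ to be local normal, not just the generators $x_1,\dots,x_r$; since your $W$ is stable under \emph{all} $\sigma_j^{\pm 1}$, the frame $V'$ you build satisfies $x_iV'=V'x_i$ for every $i$ simultaneously, and hence $x^{\beta}V'=V'x^{\beta}$ for every monomial $x^{\beta}\in S$, which is what is needed. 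With these clarifications your argument is complete and matches the paper's.
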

\begin{proof}
By Example \ref{skewquantum}, $Q_{q,\sigma}^{r,n}$ is a localization of a bijective skew $PBW$
extension $A$
of $R$ by a multiplicative Ore set of regular elements of $A$, and the multiplicative set generated by $x_{1},\dots,x_{r}$ consists of monomials, which are local normal elements. 
 From Theorem \ref{CMskew}  and
Proposition \ref{CMlocalization} we get that $Q_{q,\sigma}^{r,n}$ is $CM$.
\end{proof}

\section{Strongly noetherian algebras}

Now we will consider the strongly noetherian property for skew $PBW$ extensions, this condition was
studied by Artin, Small and Zhang in \cite{Artin2}, and appears naturally in the study of point
modules in non-commutative algebraic geometry (see \cite{Artin1}, \cite{Artin3} and
\cite{Rogalski}).
\begin{definition} Let $K$ be a commutative ring and let $A$ be a left Noetherian $K$-algebra. We
say that $A$ is left strongly Noetherian if for any commutative Noetherian $K$-algebra $C$,
$C\otimes_{K}A$
 is left Noetherian.
\end{definition}
Some examples of strongly Noetherian algebras include Weyl algebras, Sklyanin algebras over a field
$ K $ and universal enveloping algebras of finite dimensional Lie algebras (see \cite{Artin2},
Corollaries 4.11 and 4.12). Moreover, all known examples of Artin-Shelter regular algebras are
strongly Noetherian, is an open question if every Artin-Shelter regular algebra is strongly
Noetherian.

\begin{proposition}\label{SN}
Let $ K $ be a commutative ring and let $ A $ be a $ K- $algebra.
\begin{enumerate}
\item[\rm (i)] If $ A $ is left strongly Noetherian, then $A[x;\sigma,\delta]$ is left strongly Noetherian when $ \sigma $ is bijective.
\item[\rm (ii)] If $ A $ is $ \mathbb{N}- $filtered and $Gr(A)$ is left strongly Noetherian, then $ A $ is left strongly Noetherian.
\item[\rm (iii)] If $ S $ is a multiplicative Ore set of regular elements of $ A $, then $ S^{-1}A $ is left strongly Noetherian.
\end{enumerate}
\end{proposition}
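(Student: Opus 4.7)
The plan is to treat the three parts in parallel: each asserts that a construction (Ore extension, filtered ring, Ore localization) on $A$ is compatible with base change by an arbitrary commutative Noetherian $K$-algebra $C$, so that the strongly Noetherian conclusion reduces to a single left-Noetherian statement about $C\otimes_K A$ together with the corresponding preservation result in the ordinary Noetherian category.

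For (i), I would invoke the canonical identification
\[ C\otimes_K A[x;\sigma,\delta] \;\cong\; (C\otimes_K A)[x;\sigma_C,\delta_C], \]
with $\sigma_C:=\mathrm{id}_C\otimes\sigma$ (still bijective) and $\delta_C:=\mathrm{id}_C\otimes\delta$ (a $\sigma_C$-derivation). Because $A$ is left strongly Noetherian, $C\otimes_K A$ is left Noetherian, and the Hilbert basis theorem for Ore extensions with bijective automorphism part then gives that $(C\otimes_K A)[x;\sigma_C,\delta_C]$ is left Noetherian. For (iii), assuming $A$ is left strongly Noetherian (otherwise the conclusion is vacuous), write analogously
\[ C\otimes_K S^{-1}A \;\cong\; \overline{S}^{-1}(C\otimes_K A), \]
where $\overline{S}:=\{1\otimes s\mid s\in S\}$ inherits the left Ore property from $S$; since $C\otimes_K A$ is left Noetherian, Ore localization preserves left noetherianity and the conclusion follows.

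Part (ii) is the one that requires real care. Given the $\mathbb{N}$-filtration $\{F_n(A)\}$, transfer it to $C\otimes_K A$ by letting $G_n$ denote the image of $C\otimes_K F_n(A)$ in $C\otimes_K A$. The filtration $\{G_n\}$ is positive and exhaustive (because tensor products commute with direct limits), and its associated graded ring $\mathrm{gr}(C\otimes_K A)=\bigoplus_n G_n/G_{n-1}$ is a graded quotient of $\bigoplus_n C\otimes_K\bigl(F_n(A)/F_{n-1}(A)\bigr)\cong C\otimes_K Gr(A)$. By hypothesis $C\otimes_K Gr(A)$ is left Noetherian, so its graded quotient $\mathrm{gr}(C\otimes_K A)$ is left Noetherian, and the standard filtered-to-graded lifting (valid for exhaustive positive filtrations) yields left noetherianity of $C\otimes_K A$.

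The main obstacle is the failure of flatness of $C$ over $K$ in part (ii): the map $C\otimes_K F_n(A)\to C\otimes_K A$ need not be injective, so $\mathrm{gr}(C\otimes_K A)$ is only a graded quotient of $C\otimes_K Gr(A)$ rather than isomorphic to it. One sidesteps this by working with the images $G_n$ throughout and appealing to the fact that a graded quotient of a left Noetherian graded ring is left Noetherian. This is precisely the device employed by Artin, Small, and Zhang in \cite{Artin2}, whose treatment the present argument parallels.
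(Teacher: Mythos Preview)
Your argument is correct. The paper itself does not prove this proposition at all: its entire proof reads ``See \cite{Artin2}, Proposition 4.1 and Proposition 4.10.'' What you have written is essentially a reconstruction of the arguments in that reference, and you acknowledge as much in your closing sentence. So there is no discrepancy of approach to discuss; you have simply supplied the details that the paper outsources.

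Two small remarks. In part (iii) the statement as printed omits the hypothesis that $A$ itself be left strongly Noetherian; you rightly note this is needed and treat it as implicit. Also in (iii), when you pass the Ore condition to $\overline{S}=\{1\otimes s\}$ in $C\otimes_K A$, keep in mind that a general element of $C\otimes_K A$ is a finite sum of simple tensors, so one needs a short common-left-denominator step after checking the Ore condition on simple tensors; and $\overline{S}$ need not consist of regular elements, but Ore localization of a left Noetherian ring at a left Ore set (regular or not) is still left Noetherian, so the conclusion stands. These are routine, and your outline is sound.
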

\begin{proof}
See \cite{Artin2}, Proposition 4.1. and Proposition 4.10.
\end{proof}
With the previous result we get the main result of the present section.
\begin{theorem}\label{skewstrongly}
Let $K$ be a commutative ring and let $A = \sigma(R)\langle x_{1},\dots,x_{n}\rangle$ be a bijective skew $PBW$ extension of a left strongly Noetherian $K$-algebra $R$. Then $A$ is left strongly Noetherian.
\end{theorem}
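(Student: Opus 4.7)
The plan is to follow exactly the template already used successfully for the Auslander--Gorenstein condition (Theorem \ref{skewAG}) and for the Cohen--Macaulay condition (Theorem \ref{CMskew}): pass to the associated graded ring, identify it as an iterated Ore extension, apply the relevant closure property there, and then lift back through the filtration. The three parts of Proposition \ref{SN} are precisely calibrated for this kind of argument.

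First I would invoke Theorem \ref{filteredskew} to realize $A$ as an $\mathbb{N}$-filtered ring whose associated graded $Gr(A)$ is a quasi-commutative skew $PBW$ extension of $R$; since $A$ is bijective by hypothesis, the same theorem guarantees that $Gr(A)$ is a \emph{bijective} quasi-commutative skew $PBW$ extension. Next, Theorem \ref{skewore} gives an isomorphism
\[
Gr(A)\cong R[z_{1};\theta_{1}][z_{2};\theta_{2}]\cdots[z_{n};\theta_{n}],
\]
an iterated skew polynomial ring of endomorphism type in which every $\theta_{i}$ is bijective.

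Now I would run an induction on $n$ using Proposition \ref{SN}(i). The base case is the hypothesis that $R$ is left strongly Noetherian. At each inductive step the ring $R[z_{1};\theta_{1}]\cdots[z_{i};\theta_{i}]$ is a skew polynomial extension with bijective automorphism (and trivial derivation), so Proposition \ref{SN}(i) applies verbatim and preserves the property. After $n$ steps we conclude that $Gr(A)$ is left strongly Noetherian. Finally, since $A$ is $\mathbb{N}$-filtered with this $Gr(A)$, Proposition \ref{SN}(ii) transfers the property to $A$ itself, finishing the proof.

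There really is no serious obstacle: the theorem is an immediate synthesis of the structural results of Section 1 with Proposition \ref{SN}. The only point that deserves a brief word is the verification that each intermediate Ore extension in the iterated description has a \emph{bijective} distinguished endomorphism, which is exactly the content of part (2) of Theorem \ref{skewore} under the bijectivity hypothesis on $A$; without this, Proposition \ref{SN}(i) would not apply at the inductive step.
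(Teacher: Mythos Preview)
Your proof is correct and follows essentially the same route as the paper: pass to $Gr(A)$ via Theorem \ref{filteredskew}, identify it as an iterated skew polynomial ring with bijective endomorphisms via Theorem \ref{skewore}, apply Proposition \ref{SN}(i) iteratively to get $Gr(A)$ left strongly Noetherian, and then use Proposition \ref{SN}(ii) to descend to $A$. The only difference is that you spell out the induction and the role of bijectivity more explicitly than the paper does.
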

\begin{proof}
According to Theorem \ref{filteredskew}, $ Gr(A) $ is a quasi-commutative skew $ PBW $ extension, and by the hypothesis, $ Gr(A) $ is also bijective. By Theorem \ref{skewore}, $ Gr(A) $ is isomorphic to an iterated skew polynomial ring $ R[z_{1},\theta_{1}]\dots [z_{n},\theta_{n}] $ such that each $ \theta_{i} $ is bijective, $ 1 \leq i \leq n $. Proposition \ref{SN} part \rm (i) says that $ Gr(A) $ is left strongly Noetherian, and by part \rm (ii)  $ A $ is left strongly Noetherian.
\end{proof}

\begin{corollary}\label{SNquantum}
Let $ R $ be a left strongly Noetherian $ K $-algebra, then the ring of skew quantum polynomials
$$Q_{q,\sigma}^{r,n}:=R_{q,\sigma}[x_{1}^{\pm 1},\dots,x_{r}^{\pm1},x_{r+1},\dots,x_{n}]$$
is left strongly Noetherian.
\end{corollary}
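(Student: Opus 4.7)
The plan is to deduce this corollary directly from Theorem \ref{skewstrongly} combined with the localization statement in Proposition \ref{SN}(iii), following the same template used for Corollaries \ref{AGquantum} and \ref{CMquantum}.

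First, I would recall from Example \ref{skewquantum} that $Q_{q,\sigma}^{r,n}$ is not itself a skew $PBW$ extension of $R$ (because of the inverted variables $x_1^{\pm 1},\dots,x_r^{\pm 1}$), but it can be realized as the Ore localization $S^{-1}A$, where $A = \sigma(R)\langle x_1,\dots,x_n\rangle$ is the bijective skew $PBW$ extension of $R$ defined by the relations $x_j x_i = q_{ij} x_i x_j$ and $x_i r = \sigma_i(r) x_i$, and where $S$ is the multiplicative Ore set generated by the regular elements $x_1,\dots,x_r$ of $A$. This identification is exactly the content of Example \ref{skewquantum} and requires no new verification here.

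Next, since $R$ is left strongly Noetherian by hypothesis and $A$ is a bijective skew $PBW$ extension of $R$, Theorem \ref{skewstrongly} immediately gives that $A$ is left strongly Noetherian. Then, applying Proposition \ref{SN}(iii) to the Ore set $S$ of regular elements in $A$, I conclude that $S^{-1}A \cong Q_{q,\sigma}^{r,n}$ is left strongly Noetherian.

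There is essentially no hard step: the work has already been absorbed into Theorem \ref{skewstrongly} and into Proposition \ref{SN}. The only point that requires care is the preliminary observation that $x_1,\dots,x_r$ are indeed regular in $A$ and generate a left and right Ore set, so that the localization makes sense and equals $Q_{q,\sigma}^{r,n}$ in the sense of Example \ref{skewquantum}; but this is already recorded in the reference \cite{Oswaldo} cited there. Thus the proof is a two-line application chaining Theorem \ref{skewstrongly} and Proposition \ref{SN}(iii).
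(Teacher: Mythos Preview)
Your proof is correct and matches the paper's own argument essentially line for line: both invoke Example \ref{skewquantum} to express $Q_{q,\sigma}^{r,n}$ as an Ore localization of a bijective skew $PBW$ extension $A$ of $R$, apply Theorem \ref{skewstrongly} to conclude $A$ is left strongly Noetherian, and then finish with Proposition \ref{SN}(iii).
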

\begin{proof}
According to Example \ref{skewquantum}, $ Q_{q,\sigma}^{r,n}(R) $ is a localization of a bijective
skew $ PBW $ extension $ A $ of the ring $ R $ by a multiplicative Ore set of regular elements of $
A $. From Theorem \ref{skewstrongly} and Proposition \ref{SN} part \rm (iii) we get that $
Q_{q,\sigma}^{r,n}(R) $ is left strongly Noetherian.
\end{proof}
\begin{example}\label{SNexample}
All examples in  Section \ref{examples} are left strongly Noetherian assuming that the ring of coefficients is left strongly Noetherian $ K- $algebra.
\end{example}

\section{Point modules of some skew $PBW$ extensions}

As application of strongly Noetherian algebras, we present next some examples of finitely graded
algebras that are bijective skew $PBW$ extensions and such that they have nice spaces parametrizing
its point modules. It is important to remark that Artin, Tate and Van den Bergh studied point
modules in order to complete the classification of Artin-Shelter regular algebras of dimension $ 3
$ (see \cite{Artin1}).

Recall that a $K$-algebra $ A $ is \textit{finitely graded} if it is connected $ \mathbb{N}-
$graded and finitely generated as a $ K- $algebra.

\begin{definition}
Let $ A $ be a finitely graded $ K- $algebra that is generated in degree $ 1 $. A point module for $ A $ is a graded left  module $ M $ such that $ M $ is cyclic, generated in degree $ 0 $, and  $ dim_{K}(M_{n})=1 $ for all $ n \geq 0 $.
\end{definition}

 If $ A $ is commutative, then its point modules naturally correspond to the (closed) points of the scheme $ proj(A) $. Similarly,
  many non-commutative graded rings also have nice parameter spaces of point modules. For example, the point modules of the quantum
  plane or Jordan plane are parametrized by $ \mathbb{P}^{1} $, i.e., there exists a bijective correspondence between the projective space
  $ \mathbb{P}^{1} $ and the collection of isomorphism classes of point modules of the quantum plane or Jordan plane (see \cite{Rogalski}, Example 3.2. ).
 The following proposition states that for finitely graded strongly Noetherian algebras is possible to find a
 projective scheme that parametrizes the set of its point modules.

\begin{proposition}[\cite{Artin3}, Corollary [4.12]\label{E4.12}
Let $ A $ be a finitely graded $ K- $algebra which is generated in degree $ 1 $. If $ A $ is
strongly Noetherian, then the point modules of $ A $ are naturally parametrized by a commutative
projective scheme over $ K $.
\end{proposition}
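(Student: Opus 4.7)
The plan is to adapt the Artin--Zhang strategy: construct the moduli of truncated point modules as an inverse system of projective $K$-schemes, and then exploit strong noetherianity to show the inverse limit stabilizes to a single projective scheme.

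First, for each $d \geq 0$ I would introduce the moduli functor $\mathcal{P}_{d}$ on commutative noetherian $K$-algebras $C$, sending $C$ to isomorphism classes of $C$-families of truncated point modules of length $d+1$: graded $C\otimes_{K}A$-modules $M=M_{0}\oplus\cdots\oplus M_{d}$ cyclic in degree zero, with each $M_{i}$ an invertible $C$-module. Because $A$ is generated in degree $1$, such a module is determined by the successive surjections $A_{1}\otimes_{C} M_{i}\twoheadrightarrow M_{i+1}$, i.e.\ by a point of $\mathbb{P}(A_{1})^{d}$. The finitely many relations of $A$ in each degree $\leq d$ cut out closed subconditions, so $\mathcal{P}_{d}$ is represented by a closed subscheme $P_{d}\subset\mathbb{P}(A_{1})^{d}$, which is in particular projective over $K$.

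Second, forgetting the top piece $M_{d}$ defines a projective truncation morphism $\pi_{d}\colon P_{d+1}\to P_{d}$, and a point module of $A$ is precisely a compatible sequence of truncated point modules, i.e.\ a $K$-point of $\varprojlim_{d} P_{d}$. Writing $Z_{d}^{(e)}$ for the scheme-theoretic image of the composite $P_{e}\to P_{e-1}\to\cdots\to P_{d}$ (for $e\geq d$), noetherianity of the projective scheme $P_{d}$ forces the descending chain of closed subschemes $\{Z_{d}^{(e)}\}_{e\geq d}$ of $P_{d}$ to stabilize to some $Z_{d}^{\infty}\subseteq P_{d}$.

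Third, and this is where the strongly noetherian hypothesis enters, I would prove that for all sufficiently large $d$ the map $\pi_{d}$ restricts to an isomorphism $Z_{d+1}^{\infty}\xrightarrow{\sim} Z_{d}^{\infty}$. The strategy is to pull back the universal truncated family to affine charts $\operatorname{Spec}C$ of $Z_{d}^{\infty}$ (with $C$ a commutative noetherian $K$-algebra), observe that $C\otimes_{K}A$ is noetherian by hypothesis, and use this to control, uniformly in $d$, the chain of obstructions to one-step extension of a truncated point module. Once uniform stabilization is achieved at some index $d_{0}$, the scheme $Z_{d_{0}}^{\infty}$ represents $\varprojlim_{d}P_{d}$ and hence parametrizes point modules as a projective $K$-scheme.

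The main obstacle is precisely this last uniform-stabilization step. For a merely left noetherian algebra each $P_{d}$ is projective and each individual chain $\{Z_{d}^{(e)}\}_{e}$ stabilizes, but the stabilization index can depend on $d$, in which case $\varprojlim_{d}P_{d}$ need not be a scheme at all. Converting the strongly noetherian hypothesis (noetherianity of $C\otimes_{K}A$ for every commutative noetherian $C$) into a $d$-uniform bound on these stabilization indices is the technical heart of the argument, and is what forces the inverse limit to be represented by the projective scheme $Z_{d_{0}}^{\infty}$.
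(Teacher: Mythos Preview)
The paper does not give its own proof of this proposition; it is quoted verbatim as a known result with a citation to Artin--Zhang, so there is no in-paper argument to compare your proposal against. Your outline is in fact the standard Artin--Zhang strategy for this theorem: represent truncated point modules by closed subschemes $P_{d}$ of products of projective spaces, form the inverse system under truncation, and then use strong noetherianity of $C\otimes_{K}A$ over varying commutative noetherian bases $C$ to force the tower to stabilize so that the inverse limit is itself a projective scheme. Your identification of the uniform-stabilization step as the technical heart is accurate, and your sketch of how the strongly noetherian hypothesis is deployed (pulling the universal family back to affine charts and using noetherianity of the base-changed algebra) matches the original argument. One bibliographic caveat: the result as stated is really the content of Artin--Zhang's later work on abstract Hilbert schemes rather than the 1994 paper the authors cite, but that does not affect the mathematics of your proposal.
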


The following bijective skew $PBW$ extensions are finitely graded $K-$algebras generated in degree
1:
\begin{enumerate}
\item[\rm (i)] Polynomial ring, $K[x_{1},\dots,x_{n}]$.
\item[\rm (ii)] Skew polynomial ring $K[x;\sigma]$ with $\sigma$ bijective. In general, $K[x_{1};\sigma_{1}]\cdots[x_{n};\sigma_{n}]$ with $\sigma_{i}$ bijective for $1\leq i \leq n$.
\item[\rm (iii)] Quantum polynomial ring.
\item[\rm (iv)] Multiplicative analogue of the Weyl algebra.
\item[\rm (v)] The Sklyanin algebra.
\end{enumerate}
According to Theorem \ref{skewstrongly} they are left strongly noetherian and by Proposition
\ref{E4.12} there exists  a commutative projective scheme that parametrizes the set of point
modules of $ A $ .
 \begin{corollary}\label{pointskew}
 Let $A$ be a bijective skew $PBW$ extension  as above, then there exists a commutative projective scheme over $ K $ that parametrizes
 the set of point modules of $ A $.
 \end{corollary}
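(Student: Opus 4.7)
The plan is to apply Proposition \ref{E4.12} to each of the listed algebras; all the work lies in verifying the two hypotheses of that proposition, namely that each $A$ is finitely graded and generated in degree $1$, and that each $A$ is left strongly Noetherian. The generation-in-degree-$1$ and connected $\mathbb{N}$-grading are visible from the defining presentations: in cases (i)--(iv) one places each generator $x_i$ in degree $1$ (noting that the scalar defining relations $x_jx_i=c_{i,j}x_ix_j$ or $x_ir=\sigma_i(r)x_i$ are homogeneous because $R=K$ is in degree $0$), and for the Sklyanin algebra (Example \ref{Sklyanin}, with $c=0$) the relations $ayx+bxy$, $axz+bzx$, $azy+byz$ are quadratic and thus homogeneous. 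This makes $A=\bigoplus_{n\geq 0}A_n$ with $A_0=K$ and $A$ generated as a $K$-algebra by $A_1$, i.e., finitely graded and generated in degree $1$.

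Next I would establish that each $A$ is left strongly Noetherian. Since $K$ is a field, $K$ itself is trivially left strongly Noetherian as a $K$-algebra (any base change $C\otimes_K K\cong C$ is Noetherian by assumption on $C$). Each of the algebras in (i)--(v) is, by the discussion in Section \ref{examples}, a bijective skew $PBW$ extension $\sigma(K)\langle x_1,\dots,x_n\rangle$ of $K$: this is immediate for the polynomial ring, for iterated skew polynomial rings of endomorphism type with bijective $\sigma_i$, for the quantum polynomial ring and the multiplicative analogue of the Weyl algebra (both quasi-commutative bijective skew $PBW$ extensions of $K$), and for the Sklyanin algebra as noted in Example \ref{Sklyanin}. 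Applying Theorem \ref{skewstrongly} to each case with base ring $R=K$ yields that each $A$ is left strongly Noetherian.

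With both hypotheses in hand, Proposition \ref{E4.12} delivers the conclusion: there exists a commutative projective scheme over $K$ that naturally parametrizes the set of isomorphism classes of point modules of $A$.

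I do not anticipate a genuine obstacle: the corollary is essentially an application of the main theorem of the section plus a citation to Artin--Zhang. The only care required is the bookkeeping item of checking that each listed algebra really does fit the hypotheses of Proposition \ref{E4.12}, in particular that the Sklyanin algebra with $c=0$ (the case in which it is a skew $PBW$ extension) remains finitely graded in degree $1$, and that the quasi-commutative relations defining the quantum polynomial ring and the multiplicative analogue of the Weyl algebra are homogeneous of degree $2$ when all $x_i$ sit in degree $1$.
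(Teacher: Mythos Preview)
Your proposal is correct and follows exactly the same route as the paper: verify that each listed algebra is finitely graded and generated in degree $1$, invoke Theorem \ref{skewstrongly} (with $R=K$) to obtain left strong Noetherianity, and then apply Proposition \ref{E4.12}. The paper's argument is in fact the single sentence preceding the corollary; you have simply fleshed out the checks on the grading and on $K$ being strongly Noetherian that the paper leaves implicit.
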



\end{document}